\renewcommand{\theequation}%
    {\thesection.\arabic{equation}}
\newtheorem{thm}{Theorem}[section]
\newtheorem{lem}[thm]{Lemma}
\newtheorem{prop}[thm]{Proposition}
\newtheorem{exam}[thm]{Example}
\newcommand\R{{\mathbb R}}
\newcommand\Rn{{{\mathbb R}^n}}
\def\p#1{{\left({#1}\right)}}
\def\jp#1{{\left\langle{#1}\right\rangle}}
\newcommand\va{\varphi}
\newcommand\pa{\partial}
\title[Global well-posedness of Kirchhoff systems]
{Global well-posedness of Kirchhoff systems}
\author[Tokio Matsuyama]{Tokio Matsuyama}
\address{
Tokio Matsuyama:
 \endgraf
Department of Mathematics
\endgraf
Faculty of Science and Engineering
\endgraf
Chuo University
\endgraf
1-13-27, Kasuga, Bunkyo-ku
\endgraf
Tokyo 112-8551
\endgraf
Japan
\endgraf
{\it E-mail address} {\rm tokio@math.chuo-u.ac.jp}
}
\author[Michael Ruzhansky]{Michael Ruzhansky}
\address{
  Michael Ruzhansky:
  \endgraf
  Department of Mathematics
  \endgraf
  Imperial College London
  \endgraf
  180 Queen's Gate, London SW7 2AZ
  \endgraf
  United Kingdom
  \endgraf
  {\it E-mail address} {\rm m.ruzhansky@imperial.ac.uk}
  }
\thanks{
The first author was supported by 
Grant-in-Aid for Scientific 
Research (C) (No. 21540198), 
Japan Society for the Promotion of Science. 
The second author was supported in parts by the
EPSRC grant EP/E062873/1 and EPSRC Leadership Fellowship EP/G007233/1.
}
\date{\today}
\subjclass[2010]{Primary 35L40, 35L30; Secondary 35L10, 35L05, 35L75;}
\keywords{Kirchhoff systems; Kirchhoff equation; hyperbolic systems; asymptotic integration}
\begin{document}

\maketitle

\begin{abstract}
The aim of this paper is to establish the $H^1$ global well-posedness for Kirchhoff systems. 
The new approach to the construction of solutions is based on 
the asymptotic integrations for strictly hyperbolic systems 
with time-dependent coefficients. These integrations play 
an important r\^ole to setting the subsequent fixed point argument. 
The existence of solutions for less regular data is discussed, and several examples
and applications are presented.
\end{abstract}

\section{Introduction}
\label{sec:1}
The Kirchhoff equations of the form
\begin{equation}\label{KE}
\partial_t^2 u-a\left(\int_{\mathbb{\mathbb{R}}^n} |\nabla u|^2 dx\right) \Delta u=0
\quad (t\in \R, \, x\in \Rn)
\end{equation}
have been previously considered for various positive functions
$a(s)$. 
Bernstein first studied the global existence for real analytic data (see \cite{Bernstein}), 
and after him, many authors  investigated these equations further
(see \cite{Dancona1,Dancona2,Dancona3,Greenberg,Kajitani,Nishida,
Rzymowski,Yamazaki}). 
Also, the global existence for quasi-analytic data was studied by 
Nishihara (see \cite{Nishihara}), and variants of the class in \cite{Nishihara} were 
discussed in \cite{Ghisi,Hirosawa,Manfrin-JDE}. 

The approach of this paper yields new results already in the scalar
case of the classical Kirchhoff equation \eqref{KE} but, in fact, we are able
to make advances for coupled equations as well or, more generally, 
for Kirchhoff systems.
To this end, Kirchhoff systems are of interest but present several major complications
compared to the scalar case. 
First of all, even for the linearised system, 
it is much more difficult to find a suitable representation of solutions which
would, on one hand, work with the low regularity ($C^1$) of coefficients
while, on the other hand, allow one to obtain sufficiently good estimates for
solutions. Moreover, in the case of systems of higher order, it is impossible to
find its characteristics explicitly, and the geometry of the system or rather of
the level sets of the characteristics enters the picture. 

The main new idea (even for the classical equation \eqref{KE})
behind this paper is to approach the problem by
developing the ``asymptotic integration'' method for the linearised equation
to be able to control its solutions to the extent of being able to prove 
a-priori estimates necessary for the handling of the fully nonlinear problem.
Thus, for the linear strictly
hyperbolic systems we developed
in \cite{MR06} 
the method of asymptotic
integrations allowing us to obtain representations of solutions under
the low regularity of the coefficients. Consequently, we apply it in the present
setting to set
up a suitable fixed point argument assuring the well-posedness of the
Cauchy problem. The results presented in this paper
resolve the well-posedness problem for a 
general class of strictly hyperbolic systems.
Moreover, even for the classical Kirchhoff-type equation \eqref{KE} we obtain
new results. Thus, the regularity of data in Theorem 
\ref{thm:NONLOCAL} is lower than 
that in \cite{Manfrin,Dancona1}. 
Moreover, we prove the well-posedness in low dimensions
$n=1,2$ which remained open since D'Ancona and Spagnolo 
\cite{Dancona1}.

We consider the
Kirchhoff-type systems 
of the form
\begin{equation}\label{EQ:Kirchhoff systems}
\left\{
\begin{aligned}
& D_tU=A(s(t), D_x)U, \quad t\ne 0, \quad x\in \Rn,\\
%\begin{equation} \label{Cauchy data}
& U(0,x)=U_0(x)={}^T \left(f_0(x),f_1(x),\ldots,f_{m-1}(x)\right), 
\quad x \in \mathbb{R}^n, 
\end{aligned}\right.
\end{equation}
where $D=-i\pa$ and $A(s,D_x)$ is a first order $m\times m$ 
pseudo-differential 
system with a suitably smooth behaviour in  $s\in \R$ in a neighbourhood of 
$0$; $s(t)$ is a quadratic form defined to be  
\begin{equation}\label{EQ:nonlocal}
s(t)=\langle SU(t,\cdot),U(t,\cdot) \rangle_{\mathbb{L}^2(\mathbb{R}^n)}
=\int_{\mathbb{R}^n} 
{}^T(SU(t,x)) \overline{U(t,x)} \, dx
\end{equation} 
for some $m\times m$ Hermitian matrix $S$, and we put
$\mathbb{L}^2(\mathbb{R}^n)=(L^2(\Rn))^m.$
%\underbrace{L^2(\mathbb{R}^n) 
%\times \cdots \times L^2(\mathbb{R}^n)}_m$. 
We allow the operator $A(s,D_x)$ to be pseudo-differential since we 
want our analysis to be applicable to scalar higher order equations and to
coupled equations of higher orders e.g. to coupled Kirchhoff
equations, see Example \ref{exam1} and Example \ref{exam2}.
The precise meaning of ``pseudo-differential'' in this context will be
specified below. 

We assume that the system \eqref{EQ:Kirchhoff systems} is 
strictly hyperbolic. Namely, 
the characteristic polynomial of the differential operator 
$D_t-A(s,D_x)$ has real and distinct roots 
$\varphi_1(s,\xi),\ldots,\varphi_m(s,\xi)$ for any $s$ in the domain of the definition 
of matrices $A(s,\xi)$ and for any $\xi\in \Rn\backslash0$, i.e.
\begin{equation}
\mathrm{det}(\tau I-A(s,\xi))=(\tau-\varphi_1(s,\xi)) \cdots 
(\tau-\varphi_m(s,\xi)).
\label{Kirchhoff strict hyperbolicity1}
\end{equation} 
We assume that $A(s,\xi)$ is positively homogeneous in $\xi$ of
order one, i.e. we have $A(s,\lambda\xi)=\lambda A(s,\xi)$ for all
$s\in [0,\delta]$ for a suitable (usually sufficiently small) $\delta>0$, 
$\xi\not=0$, and $\lambda>0$.
Then by the strict hyperbolicity, 
we may assume that 
\begin{equation}\label{Kirchhoff strict hyperbolicity2}
\inf_{s \in[0,\delta],\, |\xi|=1} 
|\varphi_j(s,\xi)-\varphi_k(s,\xi)|>0 \quad \text{for $j \ne k$.}
\end{equation}
In the last part of the next section some examples of \eqref{EQ:Kirchhoff systems} 
will be presented (see Examples \ref{exam1}, \ref{exam2} and \ref{exam3}). 

One of the main difficulties in the analysis of Kirchhoff equations
is that even if the natural $H^1$ well-posedness
of \eqref{EQ:Kirchhoff systems} holds, this would mean that the function 
$s(t)$ is at most $C^1$. Consequently, even in the case of a linear system
\eqref{EQ:Kirchhoff systems} we would need to analyse systems with low 
regularity $C^1$ of the coefficients, in which case dispersive and Strichartz
estimates are more difficult to obtain due to the lack of a satisfactory
representation for solutions. Such analysis with applications to
nonlinear perturbations and scattering for systems \eqref{EQ:Kirchhoff systems} 
will appear elsewhere.

The global existence of \eqref{EQ:Kirchhoff systems} for differential
systems was analysed by 
Callegari \& Manfrin  (see \cite{Manfrin}, and also \cite{Manfrin1}), 
where the Cauchy data are either smooth compactly supported or
belong to a certain special class 
$\mathscr{M}(\Rn)$, 
which contains a weighted Sobolev space,
and the system is $C^2$ in time.
Precise definition of this class will be introduced as a remark after 
the statement of Theorem \ref{thm:Kirchhoff}. 
The purpose in the present paper is to find global solutions to 
\eqref{EQ:Kirchhoff systems} for a more 
general class of data, removing the smooth compactly supported data
assumption in the general result.
Our approach is to employ asymptotic integrations for {\em linear} hyperbolic 
systems with time-dependent coefficients in order to 
derive a certain integrability of the time-derivative of coefficients 
(see Lemma \ref{lem:Kirchhoff} in \S \ref{sec:3}), which enables us to use 
the fixed point argument. 
It should be noted that the present argument will also resolve an open problem 
of the well-posedness in low dimensions
in D'Ancona \& Spagnolo \cite{Dancona1} (see Theorem \ref{thm:NONLOCAL}).

This paper is organised as follows. 
In \S \ref{sec:results} we formulate the main result and give examples and
several corollaries for equations of different types.
In \S\ref{sec:2} 
we will introduce asymptotic 
integrations for linear hyperbolic systems with time-dependent coefficients, 
which enable us to prove Theorem \ref{thm:Kirchhoff}. The proof of 
Theorem \ref{thm:Kirchhoff} will be given in \S \ref{sec:3}.

%%%%%%%%%%%%%%%%%%%%%%%%%%%%%%%%%%%%%%%%%%%%%%%%%%%%%%%%%%%
%%%%%%%%%%%%%%%%%%%%%%%%%%%%%%%%%%%%%%%%%%%%%%%%%%%%%%%%%%%%
%                              Section 2
%%%%%%%%%%%%%%%%%%%%%%%%%%%%%%%%%%%%%%%%%%%%%%%%%%%%%%%%%

\section{Global well-posedness for Kirchhoff equations and systems}
\label{sec:results}

To state the main result, let us introduce a class of data which ensures the 
global well-posedness for \eqref{EQ:Kirchhoff systems}. 
A class $\mathscr{Y}(\mathbb{\mathbb{R}}^n)$ 
consists of all $U_0={}^T(f_0,f_1,\ldots,f_{m-1})\in (\mathscr{S}^\prime(\Rn))^m$ such that 
\[
\pmb{|}U_0\pmb{|}_{\mathscr{Y}(\mathbb{R}^n)}:= \sum_{j,k=0}^{m-1}
\int^\infty_{-\infty}\left(
\int_{\mathbb{S}^{n-1}}\left|\int_0^\infty 
e^{i\tau \rho}
\widehat{f}_j(\rho\omega)\overline{\widehat{f}_k(\rho\omega)} 
\rho^n \, d\rho \right|\, d\sigma(\omega)\right)
\, d\tau<\infty,
\]
where $\mathscr{S}^\prime(\Rn)$ is the space of tempered distributions on $\Rn$, and 
$\mathbb{S}^{n-1}$ is $(n-1)$-dimensional sphere and 
$d\sigma(\omega)$ is the $(n-1)$-dimensional Hausdorff measure.

We denote $\mathbb{H}^\sigma(\mathbb{\mathbb{R}}^n)
=(H^\sigma(\mathbb{\mathbb{R}}^n))^m$
%\times \cdots H^\sigma(\mathbb{\mathbb{R}}^n)}_m$ 
for $\sigma\in \mathbb{\mathbb{R}}$, 
where $H^\sigma(\mathbb{\mathbb{R}}^n)
=\langle D \rangle^{-\sigma} L^2(\mathbb{\mathbb{R}}^n)$ 
are the standard Sobolev spaces, and $\jp{D}=(1-\Delta)^{1/2}$. 
The space $\mathbb{H}^\sigma_\varkappa(\mathbb{R}^n)$ denotes the
$m$ direct product of weighted Sobolev spaces $H^\sigma_\varkappa(\mathbb{R}^n)$, 
which consist of all $f\in \mathscr{S}^\prime(\mathbb{R}^n)$
such that $\langle x \rangle^\varkappa f 
\in H^\sigma(\mathbb{R}^n)$, and $\jp{x}=(1+|x|^{2})^{1/2}$. 
Then by using Lemma A.1 in \cite{Dancona2}, 
we conclude that 
\begin{equation}\label{EQ:weight}
\mathbb{H}^1_\varkappa(\mathbb{R}^n)
\subset \mathscr{Y}(\mathbb{R}^n),
\quad \forall \varkappa>1.
\end{equation}
For a function, say $\varphi(\xi)$, positively homogeneous of order one in $\xi$,
we can factor out $\xi$ and restrict the function $\varphi$ to the unit sphere;
in this case we will be using the notations like
$\varphi(\xi/|\xi|)\in L^{\infty}(\Rn\backslash 0)$ instead of $L^{\infty}(\Rn)$, since
an extension of $\varphi(\xi/|\xi|)$ to $\xi=0$ is irrelevant for our analysis.

\medskip
We shall prove here the following: 
% Theorem 2.1
\begin{thm}\label{thm:Kirchhoff}
%Let $n\ge1$. 
Assume that system \eqref{EQ:Kirchhoff systems} is strictly hyperbolic, and that 
$A(s,\xi)=(a_{jk}(s,\xi))_{j,k=1}^m$ is an $m\times m$ matrix, positively
homogeneous of order one in $\xi$, whose entries $a_{jk}(s,\xi/|\xi|)$ are in 
$\mathrm{Lip}([0,\delta];L^\infty(\mathbb{R}^n\backslash0))$ 
for some $\delta>0$. 
If $U_0\in \mathbb{L}^2(\mathbb{R}^n)\cap \mathscr{Y}(\mathbb{R}^n)$ satisfies
\begin{equation}\label{EQ:small}
\|U_0\|^2_{\mathbb{L}^2(\mathbb{R}^n)}+\pmb{|}U_0\pmb{|}_{\mathscr{Y}(\mathbb{R}^n)}
\ll 1,
\end{equation}
then system \eqref{EQ:Kirchhoff systems}-\eqref{EQ:nonlocal} has a solution 
$U(t,x)\in C(\mathbb{R};\mathbb{L}^2(\mathbb{R}^n))$.
In addition to \eqref{EQ:small}, if $U_0\in \mathbb{H}^1(\mathbb{R}^n)$,
then the solution $U(t,x)$ exists uniquely in the class 
$C(\mathbb{R};\mathbb{H}^1(\mathbb{R}^n))\cap C^1(\mathbb{R};\mathbb{L}^2(\mathbb{R}^n))$.
\end{thm}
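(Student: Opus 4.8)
The plan is to reduce the nonlinear Kirchhoff system to a family of linear strictly hyperbolic systems with time-dependent coefficients, and then close a fixed point argument using the asymptotic integration machinery from \cite{MR06}. First I would set up the iteration scheme: given a candidate scalar function $s(t)$ (say in a suitable ball of some Banach space of functions on $\R$), the linearised problem $D_t V = A(s(t),D_x)V$, $V(0)=U_0$, is a strictly hyperbolic system with coefficients $a_{jk}(s(t),\xi/|\xi|)$ that inherit Lipschitz-in-$s$ regularity and hence, through the chain rule, a time-integrability dictated by the integrability of $\dot s(t)$. The key structural output of asymptotic integration (to be developed in \S\ref{sec:2}) is a representation of $V$ in terms of the eigenvalues $\varphi_j(s(t),\xi)$ and a diagonalising transformation, with remainder terms controlled by $\int_0^t |\dot s(\tau)|\,d\tau$. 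From this representation I would extract an a-priori bound for $\|V(t)\|_{\mathbb{L}^2}$ and, crucially, for the quantity $\int_{-\infty}^\infty |\frac{d}{dt} s_V(t)|\,dt$, where $s_V(t)=\langle S V(t),V(t)\rangle$; this is exactly where the $\mathscr{Y}(\Rn)$-norm of the data enters, since differentiating $s_V$ brings down an oscillatory factor $e^{i\tau(\varphi_j-\varphi_k)}$ against $\widehat f_j \overline{\widehat f_k}$, whose absolute integrability over $\tau$, the sphere, and $\rho$ is precisely $\pmb|U_0\pmb|_{\mathscr{Y}}$ after the homogeneity and polar-coordinate changes of variable.

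Next I would define the solution map $\Phi: s \mapsto s_V$, where $V$ solves the linear problem with coefficient $s(t)$, and verify that $\Phi$ maps a small closed ball (in the space $\{s \in C^1(\R): \dot s \in L^1(\R)\}$, or whatever the asymptotic integration lemma naturally produces) into itself and is a contraction there. The self-mapping property follows from the a-priori estimates above together with the smallness hypothesis \eqref{EQ:small}: the $\mathbb{L}^2$-norm and the $\mathscr{Y}$-seminorm of $U_0$ being $\ll 1$ forces $\|s_V\|_{L^\infty}$ and $\|\dot s_V\|_{L^1}$ to stay small, keeping $s_V$ inside the neighbourhood $[0,\delta]$ on which $A(s,\xi)$ and its spectral data are defined and Lipschitz. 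For the contraction estimate I would compare two linear solutions $V_1,V_2$ corresponding to $s_1,s_2$; the difference solves an inhomogeneous linear system whose source is $(A(s_1)-A(s_2))V_2$, of size $\lesssim \|s_1-s_2\|_{L^\infty} \|V_2\|$, and again the asymptotic integration representation converts this into a bound on $\|s_{V_1}-s_{V_2}\|$ in the working norm with a small constant. Once the fixed point $s_*$ is obtained, $U := V$ associated to $s_*$ is the desired solution, lying in $C(\R;\mathbb{L}^2(\Rn))$; if in addition $U_0 \in \mathbb{H}^1$, running the same representation at the level of $\langle D\rangle U_0$ (using the homogeneity and boundedness of the symbol) gives $U \in C(\R;\mathbb{H}^1)$, and then the equation $D_t U = A(s_*(t),D_x)U$ reads off $U \in C^1(\R;\mathbb{L}^2)$; uniqueness in this class follows from the contraction being valid once one knows a-priori that any $\mathbb{H}^1$-solution produces an admissible $s_*$.

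The main obstacle I expect is the control of $\int_{\R}|\dot s_V(t)|\,dt$ uniformly over the iteration — i.e. showing that the time-derivative of the nonlocal term is genuinely integrable in $t$ rather than merely bounded. This is the heart of the matter and the reason the $\mathscr{Y}(\Rn)$ space is introduced: one must interchange the $t$-integration with the frequency integration, exploit that after diagonalisation $s_V(t)$ is a superposition of terms $e^{it(\varphi_j(s(t),\xi)-\varphi_k(s(t),\xi))}$ — but with a \emph{time-dependent} phase, so a change of variables or a careful stationary-phase-free absolute estimate is needed — and then bound everything by $\pmb|U_0\pmb|_{\mathscr{Y}}$ up to the asymptotic-integration error terms, which themselves feed back $\int|\dot s|$ and must be absorbed by smallness. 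Making this interchange rigorous with only $\mathrm{Lip}$-in-$s$, $L^\infty$-in-$\xi/|\xi|$ coefficients, and handling the higher-order/pseudodifferential case where the $\varphi_j$ are not explicit, is where the bulk of the work in \S\ref{sec:2}--\S\ref{sec:3} will go; the remaining fixed point bookkeeping is then comparatively routine.
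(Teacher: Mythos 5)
Your overall strategy (linearise in $s(t)$, use the asymptotic integration representation to control $\int_{\R}|s'(t)|\,dt$ through the $\mathscr{Y}(\Rn)$-norm, then close a fixed point) matches the paper's, but the fixed point mechanism you propose contains a genuine gap. You want a Banach contraction for the map $s\mapsto s_V$, estimating the difference of two linear flows by writing the source as $(A(s_1(t),D_x)-A(s_2(t),D_x))V_2$ and claiming it is of size $\lesssim\|s_1-s_2\|_{L^\infty}\|V_2\|$. But $A(s,D_x)$ is a first order operator: the difference is only bounded by $\|s_1-s_2\|_{L^\infty}\,\|V_2\|_{\mathbb{H}^1}$, i.e. the comparison loses one derivative (equivalently, in the representation the phases differ by $\int_0^t(\varphi_j(s_1)-\varphi_j(s_2))\,d\tau$, producing a factor $|\xi|\int_0^t|s_1-s_2|\,d\tau$). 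For data merely in $\mathbb{L}^2(\Rn)\cap\mathscr{Y}(\Rn)$ — which is exactly the hypothesis of the first assertion of the theorem — this cannot be closed, so your scheme does not yield the existence statement in $C(\R;\mathbb{L}^2(\Rn))$. The paper circumvents this by abandoning contraction altogether: it runs the fixed point on the \emph{symbol} class $\mathscr{K}(\Lambda,K)$ (uniformly Lipschitz symbols with $\int\|\partial_t A\|\,dt\le K$), shows via Lemma \ref{lem:Kirchhoff} that $\Theta:A(t,\xi)\mapsto A(s(t),\xi)$ maps $\mathscr{K}$ into itself for small data, proves $\mathscr{K}$ is convex and compact in $L^\infty_{\mathrm{loc}}(\R;(L^\infty(\Rn\backslash0))^{m^2})$ (Ascoli--Arzel\`a plus Vitali--Hahn--Saks/weak $L^1$ compactness for the derivatives) and that $\Theta$ is merely \emph{continuous}, and then invokes Schauder--Tychonoff. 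The derivative-losing comparison you propose is used in the paper only for \emph{uniqueness}, which is why uniqueness is stated under the additional hypothesis $U_0\in\mathbb{H}^1(\Rn)$ and is closed by a Gronwall argument (the $|\xi|$ factor being paid for by $\|D_xU_0\|_{\mathbb{L}^2}$), not by a global-in-time contraction.

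A second, smaller omission: when you differentiate $s_V(t)$ and pass to the diagonalised representation, the terms with equal indices $p=q$ carry \emph{no} oscillation ($\vartheta_p-\vartheta_p=0$) and are multiplied by $\varphi_p(t,\xi)$, so they are not integrable in $t$ and cannot be absorbed into the $\mathscr{Y}$-estimate. The paper disposes of them by observing that, since $S$ is Hermitian, these diagonal contributions are purely imaginary and hence drop out of $\mathrm{Re}\,\langle S\widehat U',\widehat U\rangle$; only the $p\ne q$ terms survive, and for those the strict hyperbolicity bound $|\varphi_p-\varphi_q|\ge d>0$ justifies the change of variables $\tau=\vartheta_p(t,\omega)-\vartheta_q(t,\omega)$ that converts the $t$-integral into the $\tau$-integral defining $\pmb{|}U_0\pmb{|}_{\mathscr{Y}(\Rn)}$. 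Your sketch of ``bringing down an oscillatory factor'' implicitly assumes all terms oscillate; without the Hermitian cancellation the key bound $\int_{\R}|s'(t)|\,dt\lesssim K\|U_0\|^2_{\mathbb{L}^2}+\frac{1}{1-cK}\pmb{|}U_0\pmb{|}_{\mathscr{Y}}$ would fail.
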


As a related result to Theorem \ref{thm:Kirchhoff}, 
Callegari \& Manfrin introduced the following class (see \cite{Manfrin}): 
\[
\mathscr{M}(\mathbb{R}^n)=\left\{U_0(x)={}^T (f_0(x),f_1(x),\ldots,f_{m-1}(x))\in 
(\mathscr{S}^\prime(\mathbb{R}^n))^m:
\pmb{|}U_0\pmb{|}_{\mathscr{M}(\mathbb{R}^n)}<\infty \right\},
\]
where 
\[
\pmb{|}U_0\pmb{|}_{\mathscr{M}(\mathbb{R}^n)}=\sum_{k=0}^2\sum_{j=0}^{m-1}
\sup_{\omega\in \mathbb{S}^{n-1}} \int^\infty_0
\left|\partial^k_\rho \widehat{f}_j(\rho\omega)\right|^2 \left(1+\rho^{\max\{n,2\}} \right)
\, d\rho.
\] 
Hence, in particular, Theorem \ref{thm:Kirchhoff} generalises \cite{Manfrin} since
the inclusion among this class and ours is:
\[
(C_0^\infty(\mathbb{R}^n))^m\subset 
\mathbb{L}^1_2(\mathbb{R}^n)\cap \mathbb{H}^1_2(\mathbb{R}^n)\subset 
\mathscr{M}(\mathbb{R}^n) \subset 
\mathscr{Y}(\mathbb{R}^n),
\]
where $\mathbb{L}^1_2(\mathbb{R}^n)$ is the $m$ direct product of $L^1_2(\Rn)=\{f\in 
\mathscr{S}^\prime(\Rn):\langle x \rangle^2f\in L^1(\Rn)\}$. \\

Needless to say, Theorem \ref{thm:Kirchhoff} covers the second order case, 
i.e., the Kirchhoff equation
\begin{equation*}
\partial_t^2 u-\left(1+\int_{\mathbb{R}^n} |\nabla u|^2 \, dx\right)\Delta u=0.
\end{equation*}
In this case,
Yamazaki found a general class that ensures global 
well-posedness (see \cite{Yamazaki}). 
In fact, she proved that the space
$H^2_\varkappa(\mathbb{R}^n) \times H^1_\varkappa(\mathbb{R}^n)$ 
introduced by D'Ancona \& Spagnolo (see \cite{Dancona2}) is 
contained in $\mathscr{Y}_\varkappa(\mathbb{R}^n)$ for any $\varkappa\in (1,n+1]$. 
The class $\mathscr{Y}_\varkappa(\mathbb{R}^n)$ consists of 
the pairs of data $(f_0,f_1)\in H^{3/2}(\mathbb{R}^n)\times H^{1/2}(\mathbb{R}^n)$ such that
\[
\sum_{j,k=0}^1 \sup_{\tau\in \mathbb{R}}\langle \tau\rangle^\varkappa
\left|\int_{\mathbb{R}^n} e^{i\tau|\xi|} \widehat{f}_j(\xi)\overline{\widehat{f}_k(\xi)}
|\xi|^{3-j-k}\, d\xi \right|<\infty.
\]
After her, Kajitani found the most general class $\mathscr{K}(\mathbb{R}^n)$:
\[
\mathscr{K}(\mathbb{R}^n)=\left\{
(f_0,f_1)\in H^{3/2}(\mathbb{R}^n)\times H^{1/2}(\mathbb{R}^n):
\pmb{|}(f_0,f_1)\pmb{|}_{\mathscr{K}(\mathbb{R}^n)}<\infty
\right\},
\]
where 
\[
\pmb{|}(f_0,f_1)\pmb{|}_{\mathscr{K}(\mathbb{R}^n)}=\sum_{j,k=0}^1 \int^\infty_{-\infty}
\left|\int_{\mathbb{R}^n} e^{i\tau|\xi|} \widehat{f}_j(\xi)\overline{\widehat{f}_k(\xi)}
|\xi|^{3-j-k}\, d\xi \right| \, d\tau
\]
(see \cite{Kajitani}, and 
also Rzymowski \cite{Rzymowski} who considered the one-dimensional case). 
As to the exterior version of the class
$\mathscr{K}(\mathbb{R}^n)$, 
we can refer to the recent results \cite{Ext-Matsuyama,Rend-Matsuyama} 
(see also \cite{Yamazaki1,Yamazaki2}). 
The inclusions among these classes and ours 
are: 
%\begin{equation}\label{EQ:inclusion}
\[
H^2_\varkappa(\mathbb{R}^n) \times H^1_\varkappa(\mathbb{R}^n)
\subset 
\left\{
\begin{aligned}
& \mathscr{Y}_\varkappa(\mathbb{R}^n)\\
& \mathscr{Y}(\mathbb{R}^n)\\
\end{aligned}
\right.
\subset \mathscr{K}(\mathbb{R}^n).
%\end{equation}
\]
Here the first inclusion holds for $\varkappa\in (1,n+1]$
and the second one 
is valid for any $\varkappa>1$.\\

In the rest of this section, let us give some examples of applications
of our result. 
First of all, we note that Theorem \ref{thm:Kirchhoff} covers all the examples of 
Callegari and Manfrin \cite{Manfrin}, 
in particular, the Kirchhoff equations of higher order etc. There, 
it is assumed that the Cauchy 
data $f_k(x)$, $k=0,1,\ldots,m-1$, 
belong to $\mathscr{M}(\mathbb{R}^n)$, or even $C_0^\infty(\mathbb{R}^n)$. 
More precisely, we have: 
% Example 2.2
\begin{exam} \label{exam1}
Let us consider the Cauchy problem 
\begin{equation}\label{Kirchhoff-Equation}
\left\{ 
\begin{aligned} 
& L\left(D_t,D_x,s(t)\right)u
\equiv D^{m}_t u+\sum_{\underset{j \le m-1}
{\vert \nu \vert+j=m}} 
b_{\nu,j}\left(s(t)\right) 
D^{\nu}_x D^{j}_t u=0, \\
& D^{k}_t u(0,x)=f_k(x), \quad 
k=0,1,\cdots,m-1. 
\end{aligned}\right.
\end{equation}
Here the quadratic form $s(t)$ is given by 
\[
s(t)=\int_{\mathbb{R}^n}\sum_{|\beta|=|\gamma|=m-1}s_{\beta \gamma}D^\beta u(t,x)
\overline{D^\gamma u(t,x)}\, dx,
\]
where $\beta=(\beta_t,\beta_x)$, $\gamma=(\gamma_t,\gamma_x)$, 
$D^\beta=D_t^{\beta_t}D_x^{\beta_x}$ and 
$s_{\beta \gamma}=\overline{s_{\gamma\beta}}$. 
We assume that the symbol $L(\tau,\xi,s)$ of the differential operator 
$L(D_t,D_x,s)$ has real 
and distinct roots 
$\varphi_1(s,\xi),\ldots,\varphi_m(s,\xi)$ 
for $\xi \ne0$ and $0\le s \le \delta$ with $\delta>0$, i.e., 
\begin{equation} \label{EQ:Strict-1}
L(\tau,\xi,s)=(\tau-\varphi_1(s,\xi)) \cdots 
(\tau-\varphi_m(s,\xi)), 
\end{equation}
\begin{equation} \label{EQ:Strict-2}
\inf_{s \in[0,\delta],\, |\xi|=1} 
|\varphi_j(s,\xi)-\varphi_k(s,\xi)|>0
\quad \text{for $j \ne k$.}
\end{equation}
By taking the Fourier transform in the space variables and introducing the vector 
\[
V(t,\xi)={}^T(\vert \xi \vert^{m-1} \widehat{u}(t,\xi),\vert \xi \vert^{m-2} D_t \widehat{u}(t,\xi),
\cdots,D^{m-1}_t \widehat{u}(t,\xi)),
\]
we reduce the problem to the  system 
\begin{align*}
D_t V
=& \left(
\begin{array}{cccc}
0 & 1 & \ldots & 0 \\ 
0 & 0 & \ddots & 0 \\
\vdots & \ddots & \ddots & 1 \\
-H_m(s(t),\xi) 
& -H_{m-1}(s(t),\xi) 
& \ldots 
& -H_1(s(t),\xi) 
\end{array}\right)|\xi|V\\
=&A(s(t),\xi)V,
\end{align*}
where we put
\[
H_j(s(t),\xi)=\sum_{\vert \nu \vert=j} b_{\nu,m-j}(s(t)) (\xi/|\xi|)^\nu,
\quad (j=1,\ldots,m).
\]
Then we have: 
% Theorem 2.3
\begin{thm} \label{thm:theorem 1.3}
Assume \eqref{EQ:Strict-1}--\eqref{EQ:Strict-2}. 
If $f_k\in H^{m-k}(\mathbb{R}^n)$ $(k=0,1,\ldots,m-1)$, then 
\eqref{Kirchhoff-Equation} has a unique solution 
$u(t,x)\in \bigcap_{k=0}^{m-1} C^k(\mathbb{R};H^{m-k}(\mathbb{R}^n))$, provided that 
the quantity 
$$\|(|D_x|^{m-1}f_0,|D_x|^{m-2}f_1,\ldots,f_{m-1})\|^2_{\mathbb{L}^2(\mathbb{R}^n)}+
\pmb{|} (|D_x|^{m-1}f_0,|D_x|^{m-2}f_1,\ldots,f_{m-1})\pmb{|}_{\mathscr{Y}(\mathbb{R}^n)}
$$ 
is sufficiently small. 
\end{thm}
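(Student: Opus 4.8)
The plan is to deduce Theorem~\ref{thm:theorem 1.3} directly from Theorem~\ref{thm:Kirchhoff} by checking that the vector formulation given just above the statement fits the abstract framework of \eqref{EQ:Kirchhoff systems}--\eqref{EQ:nonlocal}. Concretely, I would first verify that the companion-type matrix $A(s(t),\xi)=|\xi|\,(\text{companion matrix of the }H_j)$ is positively homogeneous of order one in $\xi$ and that its entries $a_{jk}(s,\xi/|\xi|)$ lie in $\mathrm{Lip}([0,\delta];L^\infty(\Rn\backslash0))$; this is immediate since the only nonconstant entries are the $-H_j(s,\xi)=-\sum_{|\nu|=j}b_{\nu,m-j}(s)(\xi/|\xi|)^\nu$, which are polynomials on the sphere with coefficients $b_{\nu,m-j}(s)$ that are assumed Lipschitz (indeed $C^2$) in $s$. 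Second, I would observe that the strict hyperbolicity hypotheses \eqref{EQ:Strict-1}--\eqref{EQ:Strict-2} are exactly \eqref{Kirchhoff strict hyperbolicity1}--\eqref{Kirchhoff strict hyperbolicity2} transported through the standard fact that the characteristic polynomial of the companion matrix of $(H_1,\dots,H_m)$ is $L(\tau,\xi,s)$, so the roots $\varphi_j(s,\xi)$ of $A(s,\xi)$ coincide with those of $L$.

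Next I would identify the nonlocal term. The quadratic form $s(t)=\int_{\Rn}\sum_{|\beta|=|\gamma|=m-1}s_{\beta\gamma}D^\beta u\,\overline{D^\gamma u}\,dx$ must be rewritten as $\langle SV(t,\cdot),V(t,\cdot)\rangle_{\mathbb{L}^2}$ for a suitable Hermitian matrix $S$. Here the key point is that the components of $V(t,\xi)={}^T(|\xi|^{m-1}\widehat u,|\xi|^{m-2}D_t\widehat u,\dots,D_t^{m-1}\widehat u)$ are precisely (up to the $|\xi|$ weights built into $V$) the Fourier transforms of the top-order derivatives $D^\beta u$ with $|\beta|=m-1$, after using the equation to express $D^\nu_x D^j_t u$ with $|\nu|+j=m-1$ in terms of $V$; by Plancherel, $s(t)$ becomes a Hermitian quadratic form in $V$, and since $s_{\beta\gamma}=\overline{s_{\gamma\beta}}$ the resulting matrix $S$ is Hermitian. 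One should also check that the homogeneity degree of the multipliers matches: each $D^\beta u$ with $\beta=(\beta_t,\beta_x)$, $\beta_t+|\beta_x|=m-1$, has Fourier multiplier $\xi^{\beta_x}\widehat{D_t^{\beta_t}u}$, and writing $\xi^{\beta_x}=|\xi|^{|\beta_x|}(\xi/|\xi|)^{\beta_x}$ one matches it against the $(\beta_t+1)$-th entry $|\xi|^{m-1-\beta_t}D_t^{\beta_t}\widehat u$ of $V$, the discrepancy being only the bounded factor $(\xi/|\xi|)^{\beta_x}$ of homogeneity zero. Hence $S=S(\xi/|\xi|)$ is a bounded Hermitian symbol and $s(t)=\langle S V,V\rangle_{\mathbb{L}^2}$ in the sense of \eqref{EQ:nonlocal}.

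Then I would match up the data. With $V_0:={}^T(|D_x|^{m-1}f_0,|D_x|^{m-2}f_1,\dots,f_{m-1})$, the smallness quantity in the statement of Theorem~\ref{thm:theorem 1.3} is exactly $\|V_0\|^2_{\mathbb{L}^2}+\pmb{|}V_0\pmb{|}_{\mathscr{Y}}$, so the hypothesis is precisely \eqref{EQ:small} for $U_0=V_0$; and $f_k\in H^{m-k}(\Rn)$ for $k=0,\dots,m-1$ is equivalent to $V_0\in\mathbb{L}^2(\Rn)\cap\mathbb{H}^0(\Rn)$ together with the $\mathbb{H}^1$-regularity needed for the uniqueness part---more precisely, $|D_x|^{m-1-k}f_k\in H^1$ iff $f_k\in H^{m-k}$ near infinity, and the low-frequency part is harmless after the usual modification of $V$ by a $\jp\xi/|\xi|$-type weight. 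Applying Theorem~\ref{thm:Kirchhoff} to $V$ yields $V\in C(\R;\mathbb{L}^2)\cap C^1(\R;\mathbb{L}^2)\cap C(\R;\mathbb{H}^1)$ with uniqueness, and unwinding the definition of $V$ gives $u\in\bigcap_{k=0}^{m-1}C^k(\R;H^{m-k}(\Rn))$: the $(k+1)$-th component of $V$ being in $C^0(\R;\jp\xi^1 L^2)$ (after the low-frequency fix) says $|D_x|^{m-1-k}D_t^k u\in C(\R;H^1)$, equivalently $D_t^k u\in C(\R;H^{m-k})$, and differentiating in $t$ using the system propagates the $C^\ell$-in-$t$ statements.

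\textbf{Main obstacle.} The only genuinely delicate point is the mismatch between the homogeneous weights $|D_x|^{m-1-k}$ used to build $V$ and the inhomogeneous Sobolev spaces $H^{m-k}$, i.e. controlling the low-frequency behaviour: $|D_x|^{m-1}$ is not invertible and the passage $f_k\in H^{m-k}\Leftrightarrow |D_x|^{m-1-k}f_k\in H^1$ fails near $\xi=0$. I expect to handle this exactly as in the scalar Kirchhoff literature and as anticipated by the remark in the excerpt about using $\varphi(\xi/|\xi|)$ rather than $\varphi(\xi)$: replace the weights $|\xi|^{m-1-k}$ by $\jp\xi^{m-1-k}$ (or split into $|\xi|\le1$ and $|\xi|\ge1$ and treat the elliptic low-frequency block by a trivial energy estimate, since there the system is a bounded perturbation of a skew-adjoint one), which changes $A(s,\xi)$ only by a zero-order term uniformly bounded in $s$ and does not affect either the strict hyperbolicity at infinity or the structure of the nonlocal term $s(t)$. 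With that reduction in place, everything else is a routine verification of hypotheses.
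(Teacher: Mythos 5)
Your reduction of \eqref{Kirchhoff-Equation} to the first-order system via the vector $V$ and the companion-type matrix $A(s(t),\xi)$, followed by an application of Theorem \ref{thm:Kirchhoff}, is exactly the paper's route: the paper performs this reduction in Example \ref{exam1} and then states Theorem \ref{thm:theorem 1.3} with no further argument. Your extra observations --- that the quadratic form yields a bounded Hermitian zero-homogeneous symbol $S(\xi/|\xi|)$ rather than a constant matrix, and that the low-frequency mismatch between the homogeneous weights $|\xi|^{m-1-k}$ and the inhomogeneous spaces $H^{m-k}$ must be repaired by a $\jp{\xi}$-type weight or a frequency splitting --- are legitimate points that the paper leaves implicit, and your proposed fixes are the standard and adequate ones.
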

\end{exam}

As a new example of \eqref{EQ:Kirchhoff systems}, 
we can treat the completely coupled Kirchhoff equations of
the following type. 
% Example 2.4
\begin{exam}\label{exam2}
Let us consider the Cauchy problem 
\begin{equation}\label{EQ:Example}
\left\{ 
\begin{aligned}
& \partial_t^2u
-a_1\left(1+\|\nabla u(t)\|_{L^2}^2+\|\nabla v(t)\|_{L^2}^2\right)
\Delta u+P_1(t,D_x)v=0, \\ 
& \partial_t^2v
-a_2\left(1+\|\nabla u(t)\|_{L^2}^2+\|\nabla v(t)\|_{L^2}^2\right)
\Delta v+P_2(t,D_x)u=0,\\
& \partial_t^j u(0,x)=u_j(x), 
\quad \partial_t^j v(0,x)=v_j(x), \quad 
j=0,1,
\end{aligned}
\right.
\end{equation}
for some second order homogeneous polynomials $P_1(t,D_x),P_2(t,D_x)$, and 
for some constants $a_1,a_2>0$ with $a_1\ne a_2$.
The quadratic form is given here by
\[
s(t)=\|\nabla u(t)\|_{L^2}^2+\|\nabla v(t)\|_{L^2}^2.
\] 
We assume that
\begin{equation}\label{EQ:Ass-AP1}
|\xi|^{-2}P_k(t,\xi)\in \mathrm{Lip}_{\mathrm{loc}}(\mathbb{R};L^\infty(\mathbb{R}^n\backslash0)), \quad 
|\xi|^{-2}\partial_t P_k(t,\xi)\in L^1(\mathbb{R};L^\infty(\mathbb{R}^n\backslash0)) %, \qquad (k=1,2),
\end{equation}
for $k=1,2$, and that
\begin{equation}
\inf_{t\in \R,\, |\xi|=1} 
\left\{(a_1-a_2)^2 +4P_1(t,\xi)P_2(t,\xi)\right\}>0, 
\end{equation}
\begin{equation}\label{EQ:Ass-AP2}
\inf_{t\in \R,\, |\xi|=1} 
\left\{a^2_1 a^2_2-P_1(t,\xi)P_2(t,\xi)\right\}>0.
\end{equation}
By taking the Fourier transform in the space variables and introducing the vector 
\[
V(t,\xi)%={}^T(v_1(t,\xi),v_2(t,\xi),v_3(t,\xi),v_4(t,\xi))
={}^T(|\xi|\widehat{u}(t,\xi),\widehat{u}^\prime(t,\xi),
|\xi|\widehat{v}(t,\xi),\widehat{v}^\prime(t,\xi)),
\]
we rewrite \eqref{EQ:Example} as a system 
\[
D_t V= 
\begin{pmatrix}
0&-i|\xi|&0&0\\
ic_1(t)^2|\xi|&0& iP_1(t,\xi)|\xi|^{-1}&0\\
0&0&0& -i|\xi|\\
iP_2(t,\xi)|\xi|^{-1} &0& ic_2(t)^2|\xi|&0
\end{pmatrix}V
=: A(s(t),\xi)V, 
\]
where 
\[
c_k(t)=\sqrt{a_k(1+s(t))}, \quad k=1,2.
\]
The four characteristic roots of the equation
$$\mathrm{det}(\tau I-A(s(t),\xi))=0$$ 
in $\tau$
are given by
\begin{multline*}
\varphi_{1,2,3,4}(s(t),\xi)=\\
\pm \frac{|\xi|}{\sqrt{2}}
\sqrt{c_1(t)^2+c_2(t)^2\pm
\sqrt{\{c_1(t)^2-c_2(t)^2\}^2+4P_1(t,\xi) P_2(t,\xi)|\xi|^{-4}}}.
\end{multline*}
Then it follows from \eqref{EQ:Ass-AP1}--\eqref{EQ:Ass-AP2} that 
\[
\inf_{s \in[0,\delta],\, |\xi|=1} 
|\varphi_j(s,\xi)-\varphi_k(s,\xi)|>0
\quad \text{for $j \ne k$.}
\]
\end{exam}

Thus we have the following: 
% Theorem 2.5
\begin{thm} Assume \eqref{EQ:Ass-AP1}--\eqref{EQ:Ass-AP2}. If 
$(u_j,v_j)\in \mathbb{H}^{2-j}(\mathbb{R}^n)$ for $j=0,1$, 
then 
\eqref{EQ:Example} has a pair of unique solutions 
$(u,v)\in \bigcap_{k=0,1} 
C^k(\mathbb{R};\mathbb{H}^{2-k}(\mathbb{R}^n))$ 
provided that the quantity 
$$\|(|D_x|u_0,u_1,|D_x|v_0,v_1)\|^2_{\mathbb{L}^2(\mathbb{R}^n)}+
\pmb{|} (|D_x|u_0,u_1,|D_x|v_0,v_1)\pmb{|}_{\mathscr{Y}(\mathbb{R}^n)}$$
is sufficiently small. 
\end{thm}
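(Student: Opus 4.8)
The plan is to reduce the system \eqref{EQ:Example} to the abstract Kirchhoff system \eqref{EQ:Kirchhoff systems}-\eqref{EQ:nonlocal} and then to quote Theorem \ref{thm:Kirchhoff}. First I would fix the reduction already carried out in Example \ref{exam2}: with $V(t,\xi)={}^T(|\xi|\widehat{u},\widehat{u}',|\xi|\widehat{v},\widehat{v}')$ the problem becomes $D_tV=A(s(t),\xi)V$ with the explicit $4\times4$ matrix $A$, and here $s(t)=\|\nabla u(t)\|_{L^2}^2+\|\nabla v(t)\|_{L^2}^2$ is exactly of the form \eqref{EQ:nonlocal} with a Hermitian $S$ that is diagonal, picking out the $|\xi|\widehat u$ and $|\xi|\widehat v$ components (indeed $\|\nabla u\|_{L^2}^2=\jp{S V,V}$ for the appropriate projection $S$). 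Thus the nonlocal structure is of the required shape, and $A(s,\xi)$ is positively homogeneous of order one in $\xi$ since $P_k(t,\xi)|\xi|^{-1}$ is homogeneous of degree one and $c_k(t)^2|\xi|$ likewise.

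Second, I would verify the hypotheses of Theorem \ref{thm:Kirchhoff}. Strict hyperbolicity: the four roots $\varphi_{1,2,3,4}$ are computed explicitly in Example \ref{exam2}, and the separation \eqref{Kirchhoff strict hyperbolicity2} was already deduced there from \eqref{EQ:Ass-AP1}--\eqref{EQ:Ass-AP2}; one only has to note that since $s(t)$ ranges in a neighbourhood $[0,\delta]$ of $0$, the quantities $c_k^2=a_k(1+s)$ stay in a fixed compact set, and the two inequalities $(a_1-a_2)^2+4P_1P_2>0$ and $a_1^2a_2^2-P_1P_2>0$ guarantee that neither the inner square root nor the difference of any two roots degenerates. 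Regularity of the entries: the entries of $A(s,\xi/|\xi|)$ are, up to constants, $c_k(t)^2=a_k(1+s(t))$ and $P_k(t,\xi)|\xi|^{-2}$; the first is a smooth (indeed linear) function of $s$, hence Lipschitz on $[0,\delta]$, and for the second I would use the hypothesis $|\xi|^{-2}P_k(t,\xi)\in\mathrm{Lip}_{\mathrm{loc}}(\R;L^\infty(\Rn\setminus0))$ together with the $L^1$-in-time bound on $|\xi|^{-2}\partial_tP_k$ from \eqref{EQ:Ass-AP1} to see that, once composed with the (eventually $C^1$) function $t\mapsto s(t)$, the coefficient is in $\mathrm{Lip}([0,\delta];L^\infty(\Rn\setminus0))$ in the variable $s$ as required. (One subtlety: $P_k$ depends on $t$ directly, not only through $s(t)$; I would absorb this by treating the direct $t$-dependence as part of the linearised coefficients, exactly as the asymptotic integration scheme of \S\ref{sec:2} permits, the $L^1$ assumption \eqref{EQ:Ass-AP1} being precisely the integrability needed in Lemma \ref{lem:Kirchhoff}.)

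Third, I would translate the data. The smallness quantity in Theorem \ref{thm:Kirchhoff} for the initial vector $V_0={}^T(|D_x|u_0,u_1,|D_x|v_0,v_1)$ is $\|V_0\|_{\mathbb L^2}^2+\pmb|V_0\pmb|_{\mathscr Y}$, which is exactly the quantity assumed small in the statement; and $V_0\in\mathbb H^1(\Rn)$ is equivalent to $(u_0,u_1)\in\mathbb H^2\times\mathbb H^1$ and $(v_0,v_1)\in\mathbb H^2\times\mathbb H^1$ because the $|D_x|$ prefactor costs one derivative and $H^1$ control of $|D_x|u_0$ is $H^2$ control of $u_0$ (modulo low frequencies, which are controlled by the $L^2$ term). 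Theorem \ref{thm:Kirchhoff} then yields a unique $V\in C(\R;\mathbb H^1)\cap C^1(\R;\mathbb L^2)$; unwinding the definition of $V$ gives $u,v\in C(\R;H^2)\cap C^1(\R;H^1)$, and reading off the second-order equations \eqref{EQ:Example} themselves promotes this to $u,v\in C^2(\R;L^2)$, i.e. $(u,v)\in\bigcap_{k=0,1}C^k(\R;\mathbb H^{2-k}(\Rn))$.

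\textbf{Main obstacle.} The genuinely delicate point is the coefficient regularity check: one must confirm that the map $t\mapsto A(s(t),\xi)$ inherits the $C^1$-in-$t$ regularity that makes the asymptotic integration of \S\ref{sec:2} applicable, simultaneously in the $s$-variable (Lipschitz on $[0,\delta]$, from the smoothness of $a_k$ and the $\mathrm{Lip}$-hypothesis on $P_k$) and in the explicit $t$-variable (the $L^1$ integrability of $\partial_tP_k$), and that the strict-hyperbolicity separation is uniform once $s$ is confined to a small interval around $0$ — here one has to make sure the chosen $\delta$ is small enough that $c_1(t)^2,c_2(t)^2$ do not wander out of the region where \eqref{EQ:Ass-AP2} keeps the roots apart. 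Everything else is bookkeeping: matching the nonlocal form, the homogeneity, the data spaces, and the smallness condition to the hypotheses of Theorem \ref{thm:Kirchhoff}.
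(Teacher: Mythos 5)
Your proposal follows the paper's own route: the theorem is obtained there exactly as you describe, via the reduction of Example \ref{exam2} to the first-order $4\times4$ system $D_tV=A(s(t),\xi)V$ with $S=\mathrm{diag}(1,0,1,0)$, the strict hyperbolicity read off from the explicit roots under \eqref{EQ:Ass-AP1}--\eqref{EQ:Ass-AP2}, and an application of Theorem \ref{thm:Kirchhoff} (more precisely of its proof, since the hypotheses \eqref{EQ:Ass-AP1} are precisely the conditions \eqref{hyp1} that make the asymptotic integration of \S\ref{sec:2} and Lemma \ref{lem:Kirchhoff} work when $A$ also depends explicitly on $t$). Your remarks on the explicit $t$-dependence of $P_k$ and on translating the data $(|D_x|u_0,u_1,|D_x|v_0,v_1)\in\mathbb{H}^1$ back to $(u,v)\in\bigcap_{k=0,1}C^k(\mathbb{R};\mathbb{H}^{2-k})$ are exactly the bookkeeping the paper leaves implicit, so the argument is correct and essentially identical.
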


Theorem \ref{thm:Kirchhoff} can be also generalised in another direction. 
In fact, as it is pointed out in \cite{Manfrin}, the nonlocal term \eqref{EQ:nonlocal} 
may be replaced by 
\[
s(t)=\left\langle |\xi|^{-k}S\widehat{U}(t,\xi),\widehat{U}(t,\xi) 
\right\rangle_{\mathbb{L}^2(\mathbb{R}^n)}
\]  
for $0\le k\le n-1$. By this little change we can generalize Theorem \ref{thm:theorem 1.3} 
without any change in the proof. More precisely, we have the following example, 
which resolves an open problem in D'Ancona \& Spagnolo \cite{Dancona1}. 

% Example 2.6
\begin{exam} \label{exam3}
Let us consider the Cauchy problem for the second order equation of the form 
\begin{equation}\label{EQ:Spagnolo}
\partial^2_t u-\left(1+\int_{\mathbb{R}^n} |u(t,x)|^2 \, dx \right)\Delta u=0, 
\quad t\ne0, \quad x \in \mathbb{R}^n,
\end{equation}
with data 
\begin{equation} \label{EQ:Spagnolo-data}
u(0,x)=f_0(x), \quad \partial_t u(0,x)=f_1(x).
\end{equation}
In this particular case, the nonlocal term $s(t)$ is defined by 
\[
s(t)=\|u(t)\|^2_{L^2(\mathbb{R}^n)}.
\]
Introducing another class of data 
\[
\widetilde{\mathscr{Y}}(\mathbb{R}^n)
=\left\{(f_0,f_1)\in \mathscr{S}^\prime(\mathbb{R}^n)\times \mathscr{S}^\prime(\mathbb{R}^n):
\pmb{|}(f_0,f_1)\pmb{|}_{\widetilde{\mathscr{Y}}(\mathbb{R}^n)}<\infty\right\},
\]
where we put 
\[
\pmb{|}(f_0,f_1)\pmb{|}_{\widetilde{\mathscr{Y}}(\mathbb{R}^n)}= \sum_{j,k=0}^1
\int^\infty_{-\infty}\left(
\int_{\mathbb{S}^{n-1}}\left|\int_0^\infty
e^{i\tau \rho}
\widehat{f}_j(\rho\omega)\overline{\widehat{f}_k(\rho\omega)} 
\rho^{n-j-k} \, d\rho \right|\, d\sigma(\omega)\right)
\, d\tau,
\]
we have{\rm :}
% Theorem 2.7 
\begin{thm} \label{thm:NONLOCAL}
Let $n\ge1$. Then, for any 
$(f_0,f_1)\in (H^1(\mathbb{R}^n)\times L^2(\mathbb{R}^n))\cap \widetilde{\mathscr{Y}}(\mathbb{R}^n)$, 
\eqref{EQ:Spagnolo}--\eqref{EQ:Spagnolo-data} has a unique solution 
$u\in \bigcap_{k=0,1}C^k(\mathbb{R};H^{1-k}(\mathbb{R}^n))$, provided that the quantity 
\[
\|f_0\|^2_{L^2(\mathbb{\mathbb{R}}^n)}+\|f_1\|^2_{\dot{H}^{-1}(\mathbb{\mathbb{R}}^n)}+
\pmb{|}(f_0,f_1)\pmb{|}_{\widetilde{\mathscr{Y}}(\mathbb{\mathbb{R}}^n)}
\]
is sufficiently small. Here $\dot{H}^{-1}(\mathbb{R}^n)=|D_x|L^2(\mathbb{R}^n)$ is 
the homogeneous Sobolev space of order $-1$. 
\end{thm}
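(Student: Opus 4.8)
The plan is to derive Theorem \ref{thm:NONLOCAL} as an application of the scheme behind Theorem \ref{thm:Kirchhoff}, after the modification of the nonlocal term described just above the statement (replacing $|\xi|^{0}$ by $|\xi|^{-k}$ in the quadratic form, here with $k=2$ so that $s(t)=\|u(t)\|_{L^2}^2$ and the weight on $\hat f_j\overline{\hat f_k}$ becomes $\rho^{n-j-k}$, matching the definition of $\widetilde{\mathscr{Y}}(\mathbb{R}^n)$). First I would reduce \eqref{EQ:Spagnolo}--\eqref{EQ:Spagnolo-data} to a first-order $2\times 2$ system in the spirit of Examples \ref{exam1} and \ref{exam3}: taking the Fourier transform in $x$ and setting $V(t,\xi)={}^T(|\xi|\,\widehat u(t,\xi),\widehat u{}'(t,\xi))$ gives $D_tV=A(s(t),\xi)V$ with $A(s,\xi)=|\xi|\begin{pmatrix}0&1\\ c(s)^2&0\end{pmatrix}$ where $c(s)=\sqrt{1+s}$ and the characteristic roots are $\varphi_{1,2}(s,\xi)=\mp c(s)|\xi|$. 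This system is strictly hyperbolic for $s\in[0,\delta]$ since $|\varphi_1-\varphi_2|=2\sqrt{1+s}\,|\xi|\ge 2|\xi|$, so Lemma \ref{lem:Kirchhoff} from \S \ref{sec:3} applies and yields the key integrability of $\dot s(t)$ in terms of the $\widetilde{\mathscr{Y}}$-norm of the data (the weight $\rho^{n-j-k}$ arising precisely because the nonlocal term involves $|\xi|^{-2}$). Then the fixed point argument that proves Theorem \ref{thm:Kirchhoff} goes through verbatim with $\mathbb{L}^2$ replaced by the norm $\|f_0\|_{L^2}^2+\|f_1\|_{\dot H^{-1}}^2$, which is exactly the $L^2$-norm of the initial datum $V(0,\xi)={}^T(|\xi|\widehat f_0,\widehat f_1)$ for the reduced system; the class $\widetilde{\mathscr{Y}}$ plays for the $|\xi|^{-2}$-nonlocal term the role that $\mathscr{Y}$ played for the $|\xi|^{0}$-term. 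Smallness of $\|f_0\|_{L^2}^2+\|f_1\|_{\dot H^{-1}}^2+\pmb{|}(f_0,f_1)\pmb{|}_{\widetilde{\mathscr{Y}}}$ is the translation of \eqref{EQ:small}.

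Concretely, I would carry out the following steps. (i) Set up the reduced system and verify strict hyperbolicity and the Lipschitz dependence of $A(\cdot,\xi/|\xi|)$ on $s\in[0,\delta]$, noting that $c(s)^2=1+s$ is smooth. (ii) State the modified version of Lemma \ref{lem:Kirchhoff}: along any solution, $\int_{\mathbb{R}}|\dot s(t)|\,dt\lesssim \pmb{|}(f_0,f_1)\pmb{|}_{\widetilde{\mathscr{Y}}}$, with the change $\rho^n\mapsto\rho^{n-j-k}$ traced through the asymptotic-integration formulas of \S\ref{sec:2}; since $\dot s(t)=2\,\mathrm{Re}\langle \partial_t u(t),u(t)\rangle_{L^2}$ and $u,\partial_t u$ are recovered from $V$, this amounts to the same stationary-phase-type estimate as in the proof of Theorem \ref{thm:Kirchhoff} but with the homogeneity exponent shifted by the $|\xi|^{-2}$. (iii) Run the contraction mapping: define the map sending a candidate $s\in C^1$ (or the associated coefficient $c(s(t))$) to the new $s$ obtained by solving the linear system with that coefficient and computing \eqref{EQ:nonlocal}; closeness and contraction follow from the a-priori bound on $\int|\dot s|$ together with the energy identity for the linear system, exactly as for Theorem \ref{thm:Kirchhoff}. (iv) Transfer regularity back: $V\in C(\mathbb{R};\mathbb{L}^2)$ with the weighted norm gives $u\in C(\mathbb{R};H^1)\cap C^1(\mathbb{R};L^2)$, and $D_t^2\widehat u=c(s(t))^2|\xi|^2\widehat u$ shows the equation is satisfied in the distributional sense; uniqueness follows from the uniqueness in the linearised problem plus the Lipschitz bound on $s\mapsto c(s)^2$ on the (small) range of $s$.

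The main obstacle I anticipate is purely bookkeeping rather than conceptual: one must check that shifting the homogeneity weight from $\rho^{n}$ (as in the definition of $\mathscr{Y}$) to $\rho^{n-j-k}$ (as in $\widetilde{\mathscr{Y}}$) is exactly compensated by the $|\xi|^{-2}$ factor in the modified nonlocal term, so that the oscillatory-integral estimate underlying Lemma \ref{lem:Kirchhoff} still closes. In particular one should verify that no additional low-frequency ($\rho\to 0$) divergence is introduced — this is the delicate point in low dimensions $n=1,2$, and it is precisely here that the homogeneous norm $\|f_1\|_{\dot H^{-1}}$ (rather than $\|f_1\|_{L^2}$) is needed to control the $|\xi|$-weight in $V(0,\xi)$ near $\xi=0$. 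Once the low-frequency behaviour is handled by the $\dot H^{-1}$-norm and the $\rho^{n-j-k}$ weight in $\widetilde{\mathscr{Y}}$, the high-frequency part and the rest of the argument are identical to the proof of Theorem \ref{thm:Kirchhoff}, so "without any change in the proof" is accurate modulo these weight adjustments. I would therefore present the proof as: reduction to the system, citation of the (weight-adjusted) Lemma \ref{lem:Kirchhoff}, and the statement that the fixed point argument of \S\ref{sec:3} applies mutatis mutandis, with the explicit remark on the role of $\dot H^{-1}$ in dimensions $1$ and $2$.
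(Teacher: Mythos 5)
Your proposal is correct and follows essentially the same route as the paper, which obtains Theorem \ref{thm:NONLOCAL} exactly as you describe: replace the nonlocal term by its $|\xi|^{-2}$-weighted version, reduce \eqref{EQ:Spagnolo} to the $2\times 2$ first-order system, and rerun Lemma \ref{lem:Kirchhoff} and the fixed point argument of \S\ref{sec:3} with the shifted weight $\rho^{n-j-k}$, i.e. with $\mathscr{Y}(\Rn)$ replaced by $\widetilde{\mathscr{Y}}(\Rn)$ (the paper states this generalisation goes through ``without any change in the proof''). Two small corrections that do not affect the structure: the smallness quantity $\|f_0\|^2_{L^2}+\|f_1\|^2_{\dot H^{-1}}$ is the $L^2$-norm of $|\xi|^{-1}V(0,\xi)$, not of $V(0,\xi)$ itself (whose norm is $\|\nabla f_0\|^2_{L^2}+\|f_1\|^2_{L^2}$, the quantity relevant for the solution class rather than for smallness), and the paper's existence argument uses the Schauder--Tychonoff theorem on the coefficient class $\mathscr{K}$ together with a Gronwall argument for uniqueness, rather than a contraction mapping.
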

\end{exam}

Let us give a few remarks on Theorem \ref{thm:NONLOCAL}. This theorem generalises the
results of 
\cite{Manfrin} and \cite{Dancona1}. Indeed, when the space dimension $n$ is greater than 2, 
$n\geq 3$,
a similar result was obtained in \cite{Manfrin} and \cite{Dancona1}. However, 
the regularity of data in Theorem \ref{thm:NONLOCAL} is lower than 
that in \cite{Manfrin,Dancona1}. 
It should be noted that Theorem \ref{thm:NONLOCAL} 
also covers low dimensions $n=1,2$, the case that remained open since 
\cite{Manfrin,Dancona1}.

%%%%%%%%%%%%%%%%%%%%%%%%%%%%%%%%%%%%%%%%%%%%%%%%%%%%%%%%%%%%%%%%
%%%%%%%%%%%%%%  Section 3 %%%%%%%%%%%%%%%%%%%%%%%%%%%%%%%
%%%%%%%%%%%%%%%%%%%%%%%%%%%%%%%%%%%%%%%%%%%%%%%%%%%%%%%%%%%%%%%%

\section{Asymptotic integrations for linear hyperbolic system}
\label{sec:2}
In this section we shall derive asymptotic integrations for 
linear hyperbolic systems with time-dependent coefficients, a kind of  
representation formula for their solutions. In fact, we have discussed such arguments in 
our recent paper \cite{MR-MN} in the context of the scattering
problems. To make the argument self-contained, 
we must give the proof completely, because the Fourier integral form of solutions $U$ 
to Kirchhoff system \eqref{EQ:Kirchhoff systems} will be obtained by 
a careful observation of  
the construction of asymptotic integrations for linear systems. 
We note that the major advantage of the asymptotic integration method
developed in \cite{MR06} in comparison to other approaches, e.g. the
diagonalisation schemes for systems as in \cite{RW}, is the low $C^1$ regularity of
coefficients in $t$ sufficient for the construction compared to higher
regularity required for other methods. \\

Let us consider the linear Cauchy problem
\begin{equation} \label{Eq}
\left\{
\begin{aligned}
& D_t U=A(t,D_x)U, \quad (t,x) \in \mathbb{R} \times \mathbb{R}^n,\\ 
&U(0,x)={}^T \left(f_0(x),f_1(x),\ldots,f_{m-1}(x)\right)
\in (C^\infty_0(\Rn))^m,
\end{aligned}\right.
\end{equation} 
where $A(t,D_x)$ is a first order $m\times m$ pseudo-differential system,
with symbol $A(t,\xi)$ of the form
$A(t,\xi)=(a_{ij}(t,\xi))_{i,j=1}^m$, positively homogeneous of order
one in $\xi$. We assume that 
\begin{equation}\label{hyp1}
\text{$a_{ij}(t,\xi/|\xi|) \in \mathrm{Lip}_{\mathrm{loc}}(\R;L^\infty(\Rn\backslash0))$
and $\partial_t a_{ij}(t,\xi/|\xi|) \in L^1(\R;L^\infty(\Rn\backslash0))$},
\end{equation}
and that system \eqref{Eq} is strictly hyperbolic: 
\begin{equation}\label{hyp2}
\text{$\mathrm{det} (\tau I-A(t,\xi))=0$ has real and distinct roots 
$\varphi_1(t,\xi),\ldots,\varphi_m(t,\xi)$,}
\end{equation}
i.e., 
\begin{equation} \label{hyp3}
\underset{t\in \mathbb{R},|\xi|=1}{\inf}
|\varphi_j(t,\xi)-\varphi_k(t,\xi)|
>0 \quad \text{for $j\ne k$.} 
\end{equation}
Notice that each characteristic root $\varphi_j(t,\xi)$ is positively
homogeneous of order one in $\xi$. 

Let us first analyse certain basic properties of characteristic roots 
$\varphi_k(t,\xi)$. The next proposition is 
established in \cite{MR-MN}. 

%Proposition 3.1 
\begin{prop} [\cite{MR-MN} (Proposition 2.1)] \label{prop:root}
Let $D_t-A(t,D_x)$ be a strictly hyperbolic operator as above. 
If $a_{ij}(t,\xi/|\xi|)$ belong to $\mathrm{Lip}_{\mathrm{loc}}(\R;L^\infty(\Rn\backslash0))$
for $i,j=1,\ldots,m$, then $|\varphi_{k}(t,\xi)|\leq C|\xi|$ for some $C>0$, and functions
$\partial_t \varphi_k(t,\xi)$, $k=1,\ldots,m$, are positively
homogeneous of order one in $\xi$. 
In addition, if $\partial_t a_{ij}(t,\xi/|\xi|)$ belong to 
$L^1(\R;L^\infty(\Rn\backslash0))$ for $i,j=1,\ldots,m$, 
then we have also 
$$\partial_t\varphi_k(t,\xi/|\xi|)\in L^1(\R;L^\infty(\Rn\backslash0)).$$ 
\end{prop}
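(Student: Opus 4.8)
The plan is to prove Proposition~\ref{prop:root} by treating the characteristic roots as branches of the zero set of the characteristic polynomial and exploiting strict hyperbolicity together with the homogeneity of $A(t,\xi)$. First I would fix $\omega=\xi/|\xi|\in\Snm$ and write $P(t,\tau,\omega)=\det(\tau I-A(t,\omega))=\prod_{k=1}^m(\tau-\varphi_k(t,\omega))$; by positive homogeneity of degree one, $\varphi_k(t,\xi)=|\xi|\,\varphi_k(t,\xi/|\xi|)$, so it suffices to establish everything on the sphere. The bound $|\varphi_k(t,\xi)|\le C|\xi|$ is immediate since the coefficients of $P(t,\cdot,\omega)$ are (up to sign) the elementary symmetric functions of the $a_{ij}(t,\omega)$, hence bounded uniformly in $t$ and $\omega$ by the hypothesis $a_{ij}(t,\xi/|\xi|)\in\mathrm{Lip}_{\mathrm{loc}}(\R;L^\infty(\Rn\backslash0))$, and the roots of a monic polynomial are controlled by its coefficients.

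Next I would address the regularity in $t$. Since $A(t,\omega)$ has Lipschitz (locally) dependence on $t$ with values in $L^\infty$, the coefficients of $P$ are locally Lipschitz in $t$ uniformly in $\omega$. Strict hyperbolicity \eqref{hyp3} guarantees that for each $(t_0,\omega)$ the root $\varphi_k(t_0,\omega)$ is a simple zero of $\tau\mapsto P(t_0,\tau,\omega)$, so $\partial_\tau P(t_0,\varphi_k(t_0,\omega),\omega)\ne0$ with a lower bound uniform in $t$ and $\omega$ (the derivative is $\prod_{j\ne k}(\varphi_k-\varphi_j)$, bounded below by \eqref{hyp3} and above by the root bound just proved). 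The implicit function theorem then gives that $\varphi_k$ is locally Lipschitz in $t$, and at points of differentiability
\[
\partial_t\varphi_k(t,\omega)=-\frac{(\partial_t P)(t,\varphi_k(t,\omega),\omega)}{(\partial_\tau P)(t,\varphi_k(t,\omega),\omega)}.
\]
Here $\partial_t P$ is a polynomial in $\tau$ and in the entries $a_{ij}$ with coefficients depending on $\partial_t a_{ij}$; since $\partial_t a_{ij}(t,\xi/|\xi|)\in L^1(\R;L^\infty(\Rn\backslash0))$ and the remaining factors ($a_{ij}$, $\varphi_k$) are bounded uniformly, the numerator lies in $L^1(\R;L^\infty(\Snm))$ while the denominator is bounded below; dividing preserves the $L^1$ property. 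Rehomogenising, $\partial_t\varphi_k(t,\xi)$ is positively homogeneous of degree one in $\xi$ and $\partial_t\varphi_k(t,\xi/|\xi|)\in L^1(\R;L^\infty(\Rn\backslash0))$.

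The main obstacle I anticipate is the rigorous handling of the root selection and the differentiation formula under merely $L^\infty$-valued (rather than continuous) dependence on $\xi$: one must verify that the individual branches $\varphi_k$ can be chosen measurably and that the implicit-function-theorem argument can be carried out uniformly in $\omega$ in the $L^\infty$ sense, so that the quotient above makes sense almost everywhere in $t$ as an $L^\infty(\Snm)$-valued function. Strict hyperbolicity \eqref{hyp3} is exactly what is needed to keep the branches separated and the denominator away from zero uniformly, so the argument should go through, but the measurability and uniformity bookkeeping is the delicate point. Since this proposition is quoted from \cite{MR-MN}, I would in practice simply cite it; the sketch above is how one reconstructs the proof.
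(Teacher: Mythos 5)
Your proposal is correct and follows essentially the same route as the paper: bound the roots by the uniformly bounded coefficients of the monic characteristic polynomial, then implicitly differentiate $\det(\tau I-A(t,\xi))=\prod_k(\tau-\varphi_k)$ in $t$ at $\tau=\varphi_k$, using strict hyperbolicity to bound $\prod_{r\ne k}(\varphi_k-\varphi_r)$ away from zero and the $L^1$-in-time bound on $\partial_t a_{ij}$ (hence on the derivatives of the polynomial's coefficients) to conclude $\partial_t\varphi_k(\cdot,\xi/|\xi|)\in L^1(\R;L^\infty(\Rn\backslash0))$ and the homogeneity of $\partial_t\varphi_k$. Only a small imprecision: the coefficients $\alpha_j(t,\xi)$ are (signed) sums of $j\times j$ principal minors of $A(t,\xi)$ (equivalently, elementary symmetric functions of the roots), not of the entries $a_{ij}$ themselves, but this does not affect the argument since they are in any case polynomial in the entries and hence uniformly bounded.
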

\begin{proof}
Let us show first that $\varphi_k(t,\xi)$ are bounded with respect to 
$t\in \mathbb{R}$, i.e., 
\begin{equation}
\vert \varphi_k(t,\xi) \vert \le C \vert \xi \vert, \quad 
\text{for all $\xi \in \mathbb{R}^n$, $t \in \mathbb{R}$, $k=1,\ldots,m$.}
\label{phibdd}
\end{equation}
We will use the fact that $\varphi_k(t,\xi)$ are roots of the polynomial 
$$L(t,\tau,\xi)=\mathrm{det}(\tau I-A(t,\xi))$$ of the form 
$$L(t,\tau,\xi)=\tau^m+\alpha_1(t,\xi)\tau^{m-1}
+\cdots+\alpha_m(t,\xi)$$ with $|\alpha_j(t,\xi)|\leq M|\xi|^j$, 
for some $M\geq 1$, where 
\begin{eqnarray*}
\alpha_j(t,\xi)=(-1)^j \sum_{i_1<i_2<\cdots<i_j}
\mathrm{det}\left(
\begin{array}{ccc}
a_{i_1 i_1}(t,\xi) 
& \cdots & a_{i_1 i_j}(t,\xi) \\
\vdots & \ddots & \vdots \\
a_{i_j i_1}(t,\xi)
& \cdots & a_{i_j i_j}(t,\xi) \\
\end{array}
\right).
\end{eqnarray*}
Suppose that one of its roots $\tau$ satisfies $|\tau(t,\xi)|>2M|\xi|$. 
Then
\begin{align*}|L(t,\tau,\xi)| & 
 \geq |\tau|^{m} \p{1-\frac{|\alpha_1(t,\xi)|}{|\tau|}-
\cdots-\frac{|\alpha_m(t,\xi)|}{|\tau|^m} }      \\
& > 
2M|\xi|^m \p{1-\frac{1}{2}-\frac{1}{4M}-\cdots-\frac{1}{2^m M^{m-1}}}>0,
\end{align*}
hence $|\tau(t,\xi)|\leq 2M|\xi|$ for all $\xi\in\Rn$. 
Thus we establish \eqref{phibdd}. 

Differentiating \eqref{hyp2} with respect to $t$, we get
$$ 
\frac{\partial L(t,\tau,\xi)}{\partial t}=
\sum_{j=0}^m \partial_t \alpha_{m-j}(t,\xi) \tau^j=
-\sum_{k=1}^m \partial_t\varphi_k(t,\xi)\prod_{r\not=k} 
\p{\tau-\varphi_r(t,\xi)}.
$$
Setting $\tau=\varphi_k(t,\xi)$, we obtain
\begin{equation}\label{2-product}
\partial_t\varphi_k(t,\xi)\prod_{r\not=k} 
\p{\varphi_k(t,\xi)-\varphi_r(t,\xi)}= -
\sum_{j=0}^m \partial_t \alpha_{m-j}(t,\xi) \varphi_k(t,\xi)^j.
\end{equation}
The positive homogeneity of order one of $\partial_t \varphi_k(t,\xi)$ 
is an immediate consequence of \eqref{2-product}. 
Now, by using \eqref{hyp3}, \eqref{phibdd}, 
and the assumption 
that $|\xi|^{-j}\partial_t \alpha_j(\cdot,\xi)\in L^1(\R;L^\infty(\Rn\backslash0))$ for all $j$, 
we conclude that  
$\partial_t\varphi_k(\cdot,\xi/|\xi|)\in L^1(\R;L^\infty(\Rn\backslash0))$ for
$k=1,\ldots,m$.  
The proof is complete. %\qed
\end{proof}

To state the main auxiliary result on the representation of solutions,
% of \cite{MR-MN}, 
we prepare the next lemma. 
% Lemma 3.2
\begin{lem}[Mizohata~\cite{Mizohata} (Proposition~6.4)] 
\label{Diagonalisation}
Assume \eqref{hyp1}--\eqref{hyp3}. 
Then there exists a matrix $\mathscr{N}=\mathscr{N}(t,\xi)$ of homogeneous 
order of $0$ in $\xi$ satisfying the following properties{\rm :} 

\noindent 
{\rm (i)} $\mathscr{N}(t,\xi) A(t,\xi/|\xi|)
=\mathscr{D}(t,\xi)\mathscr{N}(t,\xi)$, 
where 
\[
\mathscr{D}(t,\xi)
=\mathrm{diag} \left( \varphi_1(t,\xi/|\xi|),\ldots,
\varphi_m(t,\xi/|\xi|)\right);
\]

\noindent 
{\rm (ii)} $\displaystyle
{\inf_{\xi \in \mathbb{R}^n\backslash 0, t \in \mathbb{R}}}
\vert {\rm det} \, \mathscr{N}(t,\xi)) \vert>0;$

\noindent 
{\rm (iii)} $\mathscr{N}(t,\xi)\in \mathrm{Lip}_{\mathrm{loc}}
(\R;(L^\infty(\Rn \backslash0))^{m^2})$ and 
$\partial_t \mathscr{N}(t,\xi)\in L^1(\mathbb{R};(L^\infty(\Rn\backslash0))^{m^2})$.
\end{lem}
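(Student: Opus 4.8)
The plan is to realise $\mathscr{N}$ as the inverse of a matrix whose columns are eigenvectors of $A(t,\xi/|\xi|)$, built in three steps: first produce the spectral projections $P_k(t,\xi)$ by an explicit polynomial (Lagrange--Sylvester) formula, which makes their $t$-regularity transparent; then fix a diagonalising frame at $t=0$; then transport it to all $t$ by Kato's transformation function. The feature to keep in mind throughout is that the entries $a_{ij}$ are only bounded measurable in $\xi$, so the usual construction of a smooth eigenframe over local charts of $\Snm$ is unavailable: the construction must be carried out pointwise in $\xi$, with every constant independent of $\xi$ and every regularity statement in $t$ made uniform in $\xi$.

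\emph{Step 1 (spectral projections).} For $\xi\neq0$ and $k=1,\dots,m$ set
\[
P_k(t,\xi)=\Bigl(\prod_{r\neq k}\p{\varphi_k-\varphi_r}(t,\xi/|\xi|)\Bigr)^{-1}\prod_{r\neq k}\p{A(t,\xi/|\xi|)-\varphi_r(t,\xi/|\xi|)I}.
\]
By \eqref{hyp3} the scalar factor is bounded below in absolute value by $\de_0^{m-1}>0$, and $A(t,\xi/|\xi|)$, $\varphi_r(t,\xi/|\xi|)$ are bounded (Proposition~\ref{prop:root}). Lagrange interpolation applied to $L(t,\tau,\xi)=\det(\tau I-A(t,\xi))$ together with Cayley--Hamilton give $\sum_kP_k=I$, $P_jP_k=\de_{jk}P_k$ and $A(t,\xi/|\xi|)P_k=\varphi_k(t,\xi/|\xi|)P_k$; hence each $P_k$ is a rank-one projection onto the $\varphi_k$-eigenspace, with $\mathrm{tr}\,P_k=1$ and $\|P_k\|\le C$ uniformly in $(t,\xi)$, and it depends on $\xi$ only through $\xi/|\xi|$, hence is positively homogeneous of order $0$. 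For the $t$-regularity, $A(t,\xi/|\xi|)$ is Lipschitz in $t$ into $L^\infty(\Rn\backslash0)$ by \eqref{hyp1}, and so is each $\varphi_r(t,\xi/|\xi|)$ (solve \eqref{2-product} for $\pa_t\varphi_r$, using \eqref{hyp3} and boundedness of the roots to see $\pa_t\varphi_r\in L^\infty_{\mathrm{loc}}(\R;L^\infty)$); since the displayed formula is polynomial in $A$ and the $\varphi_r$ and rational with denominator bounded below, $P_k\in\mathrm{Lip}_{\mathrm{loc}}(\R;(L^\infty(\Rn\backslash0))^{m^2})$. Using in addition $\pa_ta_{ij}(t,\xi/|\xi|)\in L^1(\R;L^\infty)$ and $\pa_t\varphi_r(t,\xi/|\xi|)\in L^1(\R;L^\infty)$ (Proposition~\ref{prop:root}), the same formula gives $\pa_tP_k\in L^1(\R;(L^\infty(\Rn\backslash0))^{m^2})$, with all bounds depending only on $m$, $\de_0$, $\sup\|a_{ij}\|$ and $\|\pa_ta_{ij}\|_{L^1(\R;L^\infty)}$.

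\emph{Step 2 (frame at $t=0$ and transport).} Since $\mathrm{tr}\,P_k(0,\xi)=1$, for each $\xi$ some column of $P_k(0,\xi)$ has Euclidean norm $\ge1/m$; let $j_k(\xi)$ be (measurably in $\xi$) the first such index, put $r_k(0,\xi)=P_k(0,\xi)e_{j_k(\xi)}$ and $\mathscr{M}_0(\xi)=\p{r_1(0,\xi)\mid\cdots\mid r_m(0,\xi)}$. Then each $r_k(0,\xi)$ is a $\varphi_k(0,\xi/|\xi|)$-eigenvector of $A(0,\xi/|\xi|)$ with $1/m\le|r_k(0,\xi)|\le C$; as the $k$-th row $\ell_k(\xi)$ of $\mathscr{M}_0(\xi)^{-1}$ satisfies $r_k(0,\xi)\ell_k(\xi)=P_k(0,\xi)$, one gets $|\ell_k(\xi)|=\|P_k(0,\xi)\|/|r_k(0,\xi)|\le Cm$, so $\mathscr{M}_0,\mathscr{M}_0^{-1}\in(L^\infty(\Rn\backslash0))^{m^2}$, $|\det\mathscr{M}_0(\xi)|\ge c>0$ uniformly, and $\mathscr{M}_0(\xi)^{-1}A(0,\xi/|\xi|)\mathscr{M}_0(\xi)=\mathscr{D}(0,\xi)$; all of these are homogeneous of order $0$. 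Next, for each $\xi$ let $W(t,\xi)$ solve $\pa_tW=Q(t,\xi)W$, $W(0,\xi)=I$, with $Q(t,\xi)=\sum_{k=1}^m(\pa_tP_k)(t,\xi)\,P_k(t,\xi)$; from $\sum_kP_k=I$ and $P_jP_k=\de_{jk}P_k$ one checks the intertwining identity $[Q,P_k]=\pa_tP_k$ for every $k$. Since $Q\in L^1(\R)\cap L^\infty_{\mathrm{loc}}(\R)$ with values in $(L^\infty(\Rn\backslash0))^{m^2}$, uniformly in $\xi$ (by Step 1), the ODE has a unique solution, Lipschitz in $t$ on compacta, with $W,W^{-1}$ uniformly bounded (Gronwall) and $\pa_tW,\pa_t(W^{-1})\in L^1(\R;(L^\infty(\Rn\backslash0))^{m^2})$; moreover $\tfrac{d}{dt}\p{W^{-1}P_kW}=W^{-1}\p{\pa_tP_k-[Q,P_k]}W=0$, so $W(t,\xi)P_k(0,\xi)W(t,\xi)^{-1}=P_k(t,\xi)$. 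Put $\mathscr{N}(t,\xi):=\mathscr{M}_0(\xi)^{-1}W(t,\xi)^{-1}$. The $k$-th column of $\mathscr{N}^{-1}=W\mathscr{M}_0$ is $W(t,\xi)r_k(0,\xi)\in\mathrm{Range}\,P_k(t,\xi)$ and nonzero, i.e. a $\varphi_k(t,\xi/|\xi|)$-eigenvector of $A(t,\xi/|\xi|)$; hence $\mathscr{N}(t,\xi)A(t,\xi/|\xi|)=\mathscr{D}(t,\xi)\mathscr{N}(t,\xi)$, which is (i). Property (ii) follows from $|\det\mathscr{N}|=|\det\mathscr{M}_0|^{-1}|\det W|^{-1}\ge c>0$ uniformly, and (iii) from the product rule together with $\mathscr{M}_0^{-1}$ being $t$-independent and bounded and $W^{-1}$ being Lipschitz on compacta with $\pa_t(W^{-1})\in L^1(\R)$; homogeneity of order $0$ is clear since every ingredient depends on $\xi$ only through $\xi/|\xi|$.

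\emph{Main obstacle.} The genuine difficulty is the interplay of the two weak hypotheses --- coefficients merely $L^\infty$ (not continuous) in $\xi$, and merely Lipschitz$/W^{1,1}$ in $t$ --- which simultaneously forbids the eigenvalue-perturbation construction of a continuous eigenframe over charts of $\Snm$ and any step that would spend a second $t$-derivative. The chosen remedy, the explicit polynomial formula for $P_k$ (elementary, and carried out one $\xi$ at a time) plus the Kato transformation ODE, sidesteps both, so the remaining work is entirely the bookkeeping needed to make every constant independent of $\xi$: most notably the uniform lower bound for $|\det\mathscr{M}_0(\xi)|$ and the uniform $L^1(\R)$-in-$t$ bound for $Q(\cdot,\xi)$, both of which reduce to the uniform spectral gap \eqref{hyp3} and Proposition~\ref{prop:root}.
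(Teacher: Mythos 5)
Your construction is correct, and it is a genuinely different route from the paper's: the paper gives no proof of Lemma \ref{Diagonalisation} at all, but simply quotes Mizohata's Proposition 6.4, where the diagonaliser is obtained by essentially algebraic means (left eigenvectors built from expressions in $A(t,\xi/|\xi|)$ and the roots $\varphi_k$, so that the $\mathrm{Lip}_{\mathrm{loc}}$/$L^1$ regularity in $t$ is inherited directly from \eqref{hyp1} and Proposition \ref{prop:root}). Your proof replaces that citation by a self-contained argument: the Lagrange--Sylvester formula for the spectral projections $P_k$, whose uniform bounds and $t$-regularity follow exactly as you say from \eqref{hyp3}, Proposition \ref{prop:root} and the polynomial/rational structure of the formula; a measurable choice of a well-sized column of $P_k(0,\xi)$ to get a frame $\mathscr{M}_0(\xi)$ with $\|\mathscr{M}_0\|,\|\mathscr{M}_0^{-1}\|\le C$ uniformly (your identity $r_k\ell_k=P_k$ and the rank-one norm factorisation do give $|\ell_k|\le Cm$, hence $|\det\mathscr{M}_0|\ge c>0$); and Kato's transformation function $W$ with $Q=\sum_k(\partial_tP_k)P_k$, for which the intertwining identity $[Q,P_k]=\partial_tP_k$ indeed follows from differentiating $\sum_kP_k=I$ and $P_jP_k=\delta_{jk}P_k$, so that $WP_k(0)W^{-1}=P_k(t)$ and $\mathscr{N}=\mathscr{M}_0^{-1}W^{-1}$ satisfies (i)--(iii), with (ii) from $|\det W|\le\|W\|^m\le C$ by Gronwall and the uniform $L^1$ bound on $Q$. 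What the transport ODE buys, compared with the purely algebraic construction behind the citation, is exactly the point you flag: a pointwise-in-$(t,\xi)$ choice of eigenvectors risks either degenerating at later times (if the selection is frozen at $t=0$) or jumping in $t$ (if re-selected at each time), either of which would ruin the uniform lower bound on $\det\mathscr{N}$ or the Lipschitz/$L^1$ statements in (iii); conjugation by $W$ preserves both at once, at the modest price of an extra ODE and the bookkeeping needed to keep every constant independent of $\xi$. The algebraic construction, when it works, is lighter and more explicit (which is also what the weighted variant in the final section exploits when $\xi$-derivatives of $\mathscr{N}$ are needed), but your argument is complete and fully adequate for the way the lemma is used in the paper.
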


The next proposition is known as Levinson's lemma in the 
theory of ordinary differential equations 
(see Coddington and Levinson \cite{Coddington}, and also 
Hartman \cite{Hartman}); the new feature
here is the additional dependence on $\xi$, which is crucial
for our analysis (see also Proposition 2.3 from \cite{MR-MN} and
Theorem 3.1 from \cite{MR06}). 

% Proposition 3.3
\begin{prop} \label{prop:Rep}
Assume \eqref{hyp1}--\eqref{hyp3}. 
Let 
$\mathscr{N}(t,\xi)$ be the diagonaliser of 
$A(t,\xi/|\xi|)$ constructed in Lemma \ref{Diagonalisation}. 
Then there exist vector-valued functions 
$\pmb{a}^j(t,\xi)$, $j=0,1,\ldots,m-1$, 
determined by the initial value problem 
\[
D_t\pmb{a}^j(t,\xi)=C(t,\xi)\pmb{a}^j(t,\xi), \qquad 
\left(\pmb{a}^0(0,\xi),\cdots,\pmb{a}^{m-1}(0,\xi)\right)=\mathscr{N}(0,\xi),
\]
\[
\text{with} \quad C(t,\xi)=\Phi(t,\xi)^{-1} (D_t \mathscr{N}(t,\xi))
\mathscr{N}(t,\xi)^{-1}\Phi(t,\xi) \in L^1(\mathbb{R};(L^\infty(\Rn\backslash0))^{m^2}),
\]
such that 
the solution $U(t,x)$ of \eqref{Eq} is represented by 
\begin{equation}\label{EQ:Representation of U}
U(t,x)=\sum_{j=0}^{m-1} 
\mathscr{F}^{-1} \left[\mathscr{N}(t,\xi)^{-1}\Phi(t,\xi)
\pmb{a}^j(t,\xi)\widehat{f}_j(\xi) \right](x), 
\end{equation}
where $\mathscr{F}^{-1}$ stands for the inverse Fourier transform and we put 
\[
\Phi(t,\xi)
=\mathrm{diag}\left(e^{i\int_0^t\varphi_1(\tau,\xi)\,d\tau},\cdots,
e^{i\int_0^t\varphi_m(\tau,\xi)\,d\tau}\right).
\]
Moreover, the following estimates hold{\rm :}
\begin{equation}\label{EQ:uniform estimate}
\sup_{t\in \R}\left\|\pmb{a}^j(t,\xi)\right\|_{(L^\infty(\Rn\backslash0))^m} 
\le C
\end{equation}
for $j=0,1,\ldots,m-1$. 
\end{prop}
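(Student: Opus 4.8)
The plan is to mimic the classical Levinson reduction, but keeping careful track of the $\xi$-dependence. First I would use Lemma \ref{Diagonalisation} to pass to the diagonalised picture: writing $W(t,\xi)=\mathscr{N}(t,\xi)\widehat{U}(t,\xi)$, the system $D_t\widehat{U}=A(t,\xi/|\xi|)|\xi|\widehat{U}$ (after exploiting homogeneity to factor out $|\xi|$) becomes
\[
D_tW=\mathscr{D}(t,\xi)|\xi|\,W+(D_t\mathscr{N}(t,\xi))\mathscr{N}(t,\xi)^{-1}W.
\]
Then I would remove the diagonal principal part by the substitution $W=\Phi(t,\xi)Z$, where $\Phi$ is the diagonal matrix of oscillatory factors $e^{i\int_0^t\varphi_k(\tau,\xi)d\tau}$ built from the characteristic roots; since $D_t\Phi=\mathscr{D}(t,\xi)|\xi|\Phi$ (using positive homogeneity of order one of $\varphi_k$ to write $\varphi_k(\tau,\xi)=|\xi|\varphi_k(\tau,\xi/|\xi|)$), the transport term cancels and one is left with $D_tZ=C(t,\xi)Z$ with $C(t,\xi)=\Phi^{-1}(D_t\mathscr{N})\mathscr{N}^{-1}\Phi$. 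The key observation is that conjugation by the unitary-modulus diagonal matrix $\Phi$ does not change the $L^\infty(\Rn\backslash0)$-norm of any entry, so $C(\cdot,\xi)\in L^1(\R;(L^\infty(\Rn\backslash0))^{m^2})$ follows directly from property (iii) of Lemma \ref{Diagonalisation}; this is exactly the point where the $C^1$-in-$t$ regularity hypothesis \eqref{hyp1} is enough and no diagonalisation-scheme loss occurs.

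Next I would solve the reduced ODE $D_tZ=C(t,\xi)Z$ by Picard iteration, parametrised by $\xi\in\Rn\backslash0$. Defining $\pmb{a}^j(t,\xi)$ as the $j$-th column of the fundamental solution with initial data $\mathscr{N}(0,\xi)$ at $t=0$ — i.e. $\big(\pmb{a}^0(0,\xi),\dots,\pmb{a}^{m-1}(0,\xi)\big)=\mathscr{N}(0,\xi)$ — the integral equation is
\[
\pmb{a}^j(t,\xi)=\pmb{a}^j(0,\xi)+i\int_0^t C(\tau,\xi)\pmb{a}^j(\tau,\xi)\,d\tau,
\]
and Gronwall's inequality in the $t$ variable gives, for each fixed $\xi$,
\[
\|\pmb{a}^j(t,\xi)\|_{\C^m}\le\|\mathscr{N}(0,\xi)\|\exp\!\Big(\int_{-\infty}^{\infty}\|C(\tau,\xi)\|\,d\tau\Big).
\]
Taking the supremum over $\xi\in\Rn\backslash0$ and using that $\|C(\cdot,\xi)\|$ is bounded in $L^1_t$ uniformly in $\xi$ (again property (iii) plus $\Phi$-invariance) together with the homogeneity-of-order-zero, hence uniform boundedness, of $\mathscr{N}(0,\xi)$, yields the uniform estimate \eqref{EQ:uniform estimate}. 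Unwinding the substitutions then gives $\widehat{U}(t,\xi)=\mathscr{N}(t,\xi)^{-1}\Phi(t,\xi)\sum_{j=0}^{m-1}\pmb{a}^j(t,\xi)\widehat{f}_j(\xi)$, which is \eqref{EQ:Representation of U} after taking $\mathscr{F}^{-1}$; one should check on the initial data that at $t=0$ we have $\mathscr{N}(0,\xi)^{-1}\Phi(0,\xi)(\pmb{a}^0(0,\xi),\dots,\pmb{a}^{m-1}(0,\xi))=\mathscr{N}(0,\xi)^{-1}\mathscr{N}(0,\xi)=I$, recovering $\widehat{U}(0,\xi)={}^T(\widehat f_0,\dots,\widehat f_{m-1})$.

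I expect the main obstacle to be bookkeeping rather than a deep difficulty: the places where care is genuinely needed are (a) tracking homogeneities so that the $|\xi|$ factors line up and the oscillatory phase $\int_0^t\varphi_k(\tau,\xi)d\tau$ is well-defined — this uses Proposition \ref{prop:root} to know $\varphi_k$ grows at most linearly and $\partial_t\varphi_k(\cdot,\xi/|\xi|)\in L^1$; (b) justifying that the Picard iteration converges uniformly in $\xi$, for which the crucial input is that $\int_\R\|C(\tau,\xi)\|d\tau$ is bounded \emph{uniformly} in $\xi\in\Rn\backslash0$, which in turn rests on the $\Phi$-conjugation preserving $L^\infty_\xi$-norms; and (c) verifying measurability/regularity in $\xi$ of $\pmb{a}^j$ so that the Fourier multiplier in \eqref{EQ:Representation of U} makes sense. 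Point (b) is the heart of the matter, and it is precisely the feature advertised in the paragraph preceding the proposition: the asymptotic-integration method converts the $C^1$-in-$t$ coefficient decay into an $L^1$-in-$t$, $L^\infty$-in-$\xi$ bound on the perturbation $C$, which is exactly what a uniform Levinson-type argument needs.
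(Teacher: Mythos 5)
Your proposal is correct and follows essentially the same route as the paper: diagonalise via Lemma \ref{Diagonalisation}, remove the principal part with the Wronskian transform by $\Phi$, note that conjugation by the unimodular diagonal $\Phi$ preserves the $L^\infty_\xi$ bound so that $C(t,\xi)\in L^1(\R;(L^\infty(\Rn\backslash0))^{m^2})$, and then bound the amplitudes by Picard iteration (the paper writes the Picard series explicitly where you invoke Gronwall, which is equivalent). The only cosmetic difference is that the paper phrases the representation through the columns of the fundamental matrix $\pmb{v}_j$ of the Fourier-transformed system rather than checking the initial condition directly, but the content is identical.
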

\begin{proof}
Applying the Fourier transform on $\mathbb{R}_x^n$, we get the 
following ordinary differential system from \eqref{Eq}: 
\begin{equation} \label{EQ}
D_t \pmb{v}=A(t,\xi/|\xi|)|\xi|\pmb{v}, \quad (\pmb{v}=\widehat{U}).
\end{equation}
Multiplying \eqref{EQ} by $\mathscr{N}=\mathscr{N}(t,\xi)$ 
from Lemma \ref{Diagonalisation} and putting $\mathscr{N}\pmb{v}=\pmb{w}$, we get 
\begin{equation}
D_t \pmb{w}=\mathscr{D}|\xi|\pmb{w}+(D_t \mathscr{N})\pmb{v}
=\left(\mathscr{D}|\xi|+(D_t \mathscr{N})\mathscr{N}^{-1}\right)\pmb{w},
\label{Ref EQ}
\end{equation}
since $\mathscr{N}A(t,\xi/|\xi|)=\mathscr{D}\mathscr{N}$ by 
Lemma \ref{Diagonalisation}. We can expect that the solutions of 
\eqref{Ref EQ} are asymptotic to some solution of 
\begin{equation}
D_t \pmb{y}=\mathscr{D}|\xi|\pmb{y}.
\label{Ref EQ1}
\end{equation}
Let $\Phi(t,\xi)$ be the fundamental matrix of \eqref{Ref EQ1}, i.e., 
\[
\Phi(t,\xi)=\mathrm{diag}\left( 
e^{i\int_0^t \varphi_1(\tau,\xi) \, d\tau}, 
\cdots, e^{i\int_0^t \varphi_m(\tau,\xi) \, d\tau}\right). 
\]
If we perform the Wronskian transform 
$\pmb{a}(t,\xi)=\Phi(t,\xi)^{-1}\pmb{w}(t,\xi)$, then system 
\eqref{Ref EQ} reduces to the system 
$D_t\pmb{a}=C(t,\xi)\pmb{a}$, where $C(t,\xi)$ is given by 
\[
C(t,\xi)=\Phi(t,\xi)^{-1} (D_t \mathscr{N}(t,\xi))
\mathscr{N}(t,\xi)^{-1}\Phi(t,\xi). 
\]
We note that $C(t,\xi)\in L^1(\R;(L^\infty(\Rn\backslash0))^{m^2})$, since 
$D_t \mathscr{N}(t,\xi)\in L^1(\R;(L^\infty(\Rn\backslash0))^{m^2})$ by 
Lemma \ref{Diagonalisation}.
Since $\pmb{w}(t,\xi)=\Phi(t,\xi)\pmb{a}(t,\xi)$ and 
$\mathscr{N}(t,\xi)\pmb{v}(t,\xi)=\pmb{w}(t,\xi)$, we get 
\[
\pmb{v}(t,\xi)=\mathscr{N}(t,\xi)^{-1}\Phi(t,\xi)\pmb{a}(t,\xi). 
\]

Now let $(\pmb{v}_0(t,\xi),\ldots,\pmb{v}_{m-1}(t,\xi))$ be the fundamental 
matrix of \eqref{EQ}. This means, in particular, that 
$$
(\pmb{v}_0(0,\xi),\ldots,\pmb{v}_{m-1}(0,\xi))=I.
$$
Then each $\pmb{v}_j(t,\xi)$ can be represented by 
\[
\pmb{v}_j(t,\xi)=\mathscr{N}(t,\xi)^{-1}\Phi(t,\xi)\pmb{a}^j(t,\xi), 
\]
where $\pmb{a}^j(t,\xi)$ are the corresponding amplitude functions to 
$\pmb{v}_j(t,\xi)$. 
Since $\widehat{U}(t,\xi)=\sum_{j=0}^{m-1} 
\pmb{v}_j(t,\xi)\widehat{f}_j(\xi)$, 
we arrive at 
\[
\widehat{U}(t,\xi)=\sum_{j=0}^{m-1} \mathscr{N}(t,\xi)^{-1}\Phi(t,\xi)
\pmb{a}^j(t,\xi)\widehat{f}_j(\xi).
\]
Finally, let us find the estimates for the amplitude functions 
$\pmb{a}^j(t,\xi)$. Recalling that $\pmb{a}^j(t,\xi)$ satisfy the 
problem 
\[
D_t\pmb{a}^j=C(t,\xi)\pmb{a}^j \quad \text{with 
$(\pmb{a}^0(0,\xi),\cdots,\pmb{a}^{m-1}(0,\xi))=\mathscr{N}(0,\xi)$,}
\] 
we can write $\pmb{a}^j(t,\xi)$ by the Picard series: 
\begin{multline}\label{EQ:Picard}
\pmb{a}^j(t,\xi)= \\
\left(I+i\int^t_0 C(\tau_1,\xi) \, d \tau_1
+i^2\int^{t}_0 C(\tau_1,\xi) \, 
d \tau_1\int^{\tau_1}_0 
C(\tau_2,\xi)\, d \tau_2+\cdots \right) \pmb{a}^j(0,\xi).
\end{multline}
This implies that 
\begin{equation} \label{EQ:UNIF}
\left|\pmb{a}^j(t,\xi)\right| \le 
e^{c\int_\R 
\|\partial_\tau \mathscr{N}(\tau,\xi)\|_{L^\infty(\Rn)}\, d\tau} 
|\pmb{a}^j(0,\xi)|,
\end{equation}
where we have used the following fact: 

\medskip
\noindent
Let $f(t)$ be a $L^1_{\mathrm{loc}}$-function on $\mathbb{R}$. Then 
\[
e^{\int^{t}_{s} f(\tau) \, d \tau}
=1+\int^{t}_{s} f(\tau_1) \, d \tau_1
+\int^{t}_{s} f(\tau_1) \, 
d \tau_1 \int^{\tau_1}_{s} f(\tau_2) 
\, d \tau_2+\cdots.
\]
The proof of Proposition \ref{prop:Rep} is now finished. 
\end{proof}

%%%%%%%%%%%%%%%%%%%%%%%%%%%%%%%%%%%%%%%%%%%%%%%%%%%%%%%%%%%%%%%%%%%%%
%%%%%%%%%%%%%%%%%%%%%%%% Section 4 %%%%%%%%%%%%%%%%%%%%%%%%%%%%%%%%%%%% 
%%%%%%%%%%%%%%%%%%%%%%%%%%%%%%%%%%%%%%%%%%%%%%%%%%%%%%%%%%%%%%%%%%%%%
 
\section{Proof of Theorem \ref{thm:Kirchhoff}}
\label{sec:3}
In this section we shall prove the global well-posedness 
for Kirchhoff system \eqref{EQ:Kirchhoff systems}. 
The strategy is to employ the Schauder-Tychonoff fixed point theorem. 
Let us consider the {\em linear} Cauchy problem \eqref{Eq} again: 
\[
\left\{
\begin{aligned}
& D_t U=A(t,D_x)U, \quad (t,x) \in \mathbb{R} \times \mathbb{R}^n,\\
& U(0,x)={}^T \left(f_0(x),f_1(x),\ldots,f_{m-1}(x)\right),
\end{aligned}\right.
\]
where $A(t,D_x)$ is the first order $m\times m$ pseudo-differential system,
with symbol $A(t,\xi)$ positively homogeneous of order one. We assume that 
$A(t,\xi)$ satisfies the regularity condition 
\eqref{hyp1} and the strictly 
hyperbolic condition \eqref{hyp2}--\eqref{hyp3}.
Notice that each characteristic root $\varphi_j(t,\xi)$ and 
its time derivative $\pa_t \varphi_j(t,\xi)$ are  positively
homogeneous of order one in $\xi$ on account of 
Proposition \ref{prop:root}. 
Furthermore, we observe from \eqref{EQ:Representation of U} and 
Plancherel's identity that if $U_0\in \mathbb{H}^\sigma(\Rn)$ for some 
$\sigma\ge0$, then the solution $U(t,x)$ to the linear equation \eqref{Eq} satisfies 
an energy estimate
\begin{equation} \label{EQ:Energy}
\|U(t,\cdot)\|_{\mathbb{H}^\sigma(\Rn)}\le C\|U_0\|_{\mathbb{H}^\sigma(\Rn)}, 
\quad \forall t\in \R.
\end{equation}

Let us introduce a class of symbols of differential operators, 
which is convenient for the fixed point argument. \\

\noindent 
{\bf Class $\mathscr{K}$.} {\em 
Given two constants $\Lambda>0$ and $K>0$, 
we say that a symbol $A(t,\xi)$    
of a pseudo-differential operator $A(t,D_x)$ 
belongs to 
$\mathscr{K}=\mathscr{K}(\Lambda,K)$ if $A(t,\xi/|\xi|)$ belongs to 
$\mathrm{Lip}_{\mathrm{loc}}(\mathbb{R};(L^\infty(\Rn\backslash0))^{m^2})$ and satisfies
\[
\|A(t,\xi/|\xi|)\|_{L^\infty(\R;(L^\infty(\Rn\backslash0))^{m^2})} \le \Lambda,
\]
\[
\int^\infty_{-\infty}
\left\| \partial_t A(t,\xi/|\xi|)\right\|_{(L^\infty(\Rn\backslash0))^{m^2}}\, dt
\le K. 
\] 
}

The next lemma is the heart of our argument. It will be applied with a 
sufficiently small constant $K_0>0$ which will be fixed later, and for which
all the constants in the estimates of the next lemma are positive.
% Lemma 4.1
\begin{lem}\label{lem:Kirchhoff}
Let $n\ge1$. Assume that the symbol $A(t,\xi)$ 
of a pseudo-differential operator $A(t,D_x)$ satisfies \eqref{hyp2}--\eqref{hyp3}
and 
belongs to $\mathscr{K}=\mathscr{K}(\Lambda,K)$ 
for some $\Lambda>0$ and $0<K\le K_0$ with a sufficiently small constant 
$K_0>0$.
Let $U\in C(\mathbb{R};\mathbb{L}^2(\Rn))$ 
be a solution to the Cauchy problem 
\[
D_tU=A(t,D_x)U, \quad U(0,x)=U_0(x)\in 
\mathbb{L}^2(\Rn) \cap \mathscr{Y}(\Rn),
\] 
and let $s(t)$ be the function
\[
s(t)=\langle SU(t,\cdot),U(t,\cdot)\rangle_{\mathbb{L}^2(\Rn)}. 
\]
Then there exist two constants $M>0$ and $c>0$
independent of $U$ and $K$  such that 
\begin{align}\label{core1}
& \|A(s(t),\omega)\|_{(L^\infty(\mathbb{S}^{n-1}))^{m^2}} \\
\le& \|A(s(0),\omega)\|_{(L^\infty(\mathbb{S}^{n-1}))^{m^2}}
+M \left(K\|U_0\|_{\mathbb{L}^2(\Rn)}^2
+\frac{1}{1-cK}\|U_0\|_{\mathscr{Y}(\Rn)}\right), 
\nonumber
\end{align}
\begin{multline}\label{core2}
\int^\infty_{-\infty}\left\|\partial_t \left[A(s(t),\omega)\right] 
\right\|_{(L^\infty(\mathbb{S}^{n-1}))^{m^2}} 
\, dt \\
\le M \left(K\|U_0\|_{\mathbb{L}^2(\Rn)}^2
+\frac{1}{1-cK}\|U_0\|_{\mathscr{Y}(\Rn)}\right).
\end{multline} 
\end{lem}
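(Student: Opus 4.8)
The plan is to estimate the two quantities on the left through the chain rule in $s$, writing $\partial_t[A(s(t),\omega)] = (\partial_s A)(s(t),\omega)\, s'(t)$, so that the Lipschitz hypothesis on $A(s,\xi/|\xi|)$ in $s\in[0,\delta]$ reduces everything to control of $s'(t)$ in $L^1(\mathbb{R})$ and of $s(t)-s(0)$ (or $\|s\|_{L^\infty}$) in terms of the data. The heart of the matter is therefore to prove
\[
\int_{-\infty}^{\infty} |s'(t)|\, dt \le M\left(K\|U_0\|_{\mathbb{L}^2(\Rn)}^2 + \frac{1}{1-cK}\|U_0\|_{\mathscr{Y}(\Rn)}\right),
\]
after which \eqref{core1} follows by integrating and \eqref{core2} is immediate. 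One should first note that $s(t)$ is well defined and bounded: by the energy estimate \eqref{EQ:Energy} (with $\sigma=0$), $\|U(t,\cdot)\|_{\mathbb{L}^2}\le C\|U_0\|_{\mathbb{L}^2}$, so $|s(t)|\le \|S\|\, C^2\|U_0\|_{\mathbb{L}^2}^2$; assuming the smallness \eqref{EQ:small}, this keeps $s(t)$ inside the interval $[0,\delta]$ where the hypotheses on $A$ apply, so the chain-rule step is legitimate.

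To compute $s'(t)$ I would use the linear equation $D_tU = A(t,D_x)U$ (here $A(t,\xi)$ stands for $A(s(t),\xi)$, frozen along the orbit, which lies in the class $\mathscr{K}(\Lambda,K)$ by hypothesis). Differentiating $s(t)=\langle SU,U\rangle$ and using that $A(t,D_x)$ acts, one gets $s'(t) = \langle S\partial_tU,U\rangle + \langle SU,\partial_tU\rangle = i\langle (A^*S - SA)U, U\rangle$ — schematically a quadratic form in $U$ with a bounded kernel. The crucial point is that this expression is \emph{not} integrable in $t$ on the face of it ($U$ only decays like an $L^2$ function uniformly in $t$); the integrability must come from the oscillatory structure of $U$ exhibited by the asymptotic-integration representation \eqref{EQ:Representation of U} of Proposition \ref{prop:Rep}. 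The plan is to substitute
\[
\widehat{U}(t,\xi) = \sum_{j=0}^{m-1}\mathscr{N}(t,\xi)^{-1}\Phi(t,\xi)\pmb{a}^j(t,\xi)\widehat{f}_j(\xi)
\]
into $s'(t)$ via Plancherel, so that $s'(t)$ becomes a sum of integrals over $\xi$ of products of two such factors; the two $\Phi$-factors combine into phases $e^{i\int_0^t(\varphi_k(\tau,\xi)-\varphi_l(\tau,\xi))\,d\tau}$. Passing to polar coordinates $\xi=\rho\omega$ and using positive homogeneity of order one of the $\varphi_k$, the phase becomes $\rho\int_0^t(\varphi_k(\tau,\omega)-\varphi_l(\tau,\omega))\,d\tau$, and after a change of the time variable adapted to $\int_0^t\varphi_k(\tau,\omega)\,d\tau$ (the "asymptotic time") the relevant integral over $\rho$ looks like the one defining the $\mathscr{Y}$-norm: $\int_0^\infty e^{i\tau\rho}\widehat{f}_j(\rho\omega)\overline{\widehat{f}_k(\rho\omega)}\rho^n\,d\rho$. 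For the diagonal terms $k=l$ the phase disappears and those terms are genuinely bounded by $\|U_0\|_{\mathbb{L}^2}^2$, but they come multiplied by $\partial_t$ of an amplitude or of $\mathscr{N}$ or of a diagonal phase, each of which is in $L^1(dt)$ with norm $\lesssim K$ — this produces the $K\|U_0\|_{\mathbb{L}^2}^2$ term. The off-diagonal terms, after the change of variables and Fubini, contribute the $\|U_0\|_{\mathscr{Y}}$ term, with the constant $\frac{1}{1-cK}$ arising from the Picard/Neumann series \eqref{EQ:Picard}–\eqref{EQ:UNIF} for the amplitudes $\pmb{a}^j$, whose geometric bound in terms of $cK<1$ is exactly what forces $K\le K_0$ small.

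The main obstacle I expect is the change of variables in $t$ and the Fubini step: one must justify that $t\mapsto \int_0^t\varphi_k(\tau,\omega)\,d\tau$ is a legitimate (bi-Lipschitz, or at least monotone with controlled derivative) change of variable uniformly in $\omega$, using strict hyperbolicity \eqref{hyp3} and the bound $|\varphi_k(t,\xi)|\le C|\xi|$ from Proposition \ref{prop:root} from below and above, and then interchange the $t$-integral (now disguised as a $\tau$-integral against $d\sigma(\omega)$) with the $\rho$-integral so as to recognize $\pmb{|}U_0\pmb{|}_{\mathscr{Y}}$; keeping track of the weight $\rho^n$ (which is why the $\mathscr{Y}$-norm carries precisely $\rho^n$, matching the Jacobian $\rho^{n-1}\,d\rho\,d\sigma$ of polar coordinates together with the extra $\rho$ from $|A(s,\xi)|\sim|\xi|$) is the bookkeeping that has to be done carefully. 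A secondary technical point is that the representation \eqref{EQ:Representation of U} was derived for data in $(C_0^\infty(\Rn))^m$, so one first proves the estimate there and then extends to $U_0\in\mathbb{L}^2\cap\mathscr{Y}$ by density, which is why the statement only claims $U\in C(\mathbb{R};\mathbb{L}^2(\Rn))$ as the ambient class.
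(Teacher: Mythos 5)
Your overall plan does follow the paper's route: reduce \eqref{core1} to \eqref{core2}, reduce \eqref{core2} to an $L^1(\R)$ bound on $s'(t)$, insert the representation \eqref{EQ:Representation of U} of Proposition \ref{prop:Rep} into $s'(t)$, let the terms carrying $\partial_t\mathscr{N}^{-1}$ or $\partial_t\pmb{a}^j$ produce the $K\|U_0\|^2_{\mathbb{L}^2}$ contribution, and extract the $\pmb{|}U_0\pmb{|}_{\mathscr{Y}}$ contribution from the off-diagonal oscillations by a monotone change of the time variable (justified by \eqref{hyp3}) together with the Picard series, which is where $1/(1-cK)$ and the smallness of $K_0$ enter. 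All of that is consistent with the actual proof.

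However, there is a genuine gap at the single most dangerous point: the terms in which $\partial_t$ falls on the phase matrix $\Phi$. That derivative produces the factor $i\varphi_p(t,\xi)e^{i\vartheta_p(t,\xi)}$, and $\varphi_p(t,\xi/|\xi|)$ is only \emph{bounded} in $t$ (Proposition \ref{prop:root}); it is not in $L^1(\R_t)$ and is not of size $K$ — only $\partial_t\varphi_p$ has the $L^1$ bound. So your claim that every diagonal ($p=q$) term ``comes multiplied by $\partial_t$ of an amplitude, of $\mathscr{N}$, or of a diagonal phase, each in $L^1(dt)$ with norm $\lesssim K$'' fails precisely for the phase-derivative terms: when $p=q$ the oscillation cancels and one is left with expressions of the type $\int_{\Rn}\varphi_p(t,\xi)\,(\text{bounded coefficients})\,|a^{pj}(t,\xi)\widehat f_j(\xi)|^2\,d\xi$, which have a fixed size for all $t$, so their integral over $t\in\R$ diverges and the bound \eqref{core2} cannot be reached this way. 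The paper disposes of exactly these terms by an algebraic cancellation rather than by size: writing $s'(t)=2\,\mathrm{Re}\,\langle S\widehat U',\widehat U\rangle$, the diagonal $p=q$ contributions of the $\partial_t\Phi$ part are purely imaginary — the matrix $\mathscr{N}^{-*}S\mathscr{N}^{-1}$ is Hermitian, hence has real diagonal entries, which are multiplied by the real roots $\varphi_p$ and by $i$ — so they vanish upon taking the real part. (In your own compact formula $s'=i\langle(SA-A^*S)U,U\rangle$ the same cancellation is the statement that $\mathscr{N}^{-*}(SA-A^*S)\mathscr{N}^{-1}=[\mathscr{N}^{-*}S\mathscr{N}^{-1},\mathscr{D}]$ is a commutator with the diagonal matrix $\mathscr{D}$ and therefore has zero diagonal — but you never invoke this.) Without identifying this cancellation the argument does not close. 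A secondary, smaller inaccuracy: the change of time variable must be performed with respect to the \emph{difference} $\vartheta_p(t,\omega)-\vartheta_q(t,\omega)$, $p\ne q$, whose $t$-derivative is bounded below by \eqref{hyp3}; a single phase $\vartheta_k$ need not be monotone, since an individual root $\varphi_k$ may vanish.
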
 
We will be interested in sufficiently small $K_0>0$ so that we would have
$1-cK>0$ in the estimates above.
\begin{proof} The estimate \eqref{core1} easily follows from 
\eqref{core2} and the following identity:
\[
A(s(t),\omega)=A(s(0),\omega)+\int^t_0 \pa_\tau \left[A(s(\tau),\omega)\right]\, d\tau.
\]
Hence it is sufficient to 
concentrate on proving \eqref{core2}. However, since 
\[
\left\|\partial_t \left[ A(s(t),\omega) \right] 
\right\|_{(L^\infty(\mathbb{S}^{n-1}))^{m^2}} 
\le C|s^\prime(t)|,
\]
we only have  to show that
\begin{equation}\label{core3}
\int^\infty_{-\infty}|s^\prime(t)|\, dt
\le M\left(K\|U_0\|_{\mathbb{L}^2(\Rn)}^2
+\frac{1}{1-cK}\|U_0\|_{\mathscr{Y}(\Rn)}\right).
\end{equation} 
We recall from Proposition \ref{prop:Rep} that
\[
\widehat{U}(t,\xi)=\sum_{j=0}^{m-1} \mathscr{N}(t,\xi)^{-1}
\Phi(t,\xi)\pmb{a}^j(t,\xi)\widehat{f}_j(\xi), 
\] 
and its time-derivative version is given by 
\begin{multline*}
\widehat{U}^\prime(t,\xi)=\sum_{j=0}^{m-1} 
\{ \pa_t \mathscr{N}(t,\xi)^{-1}\Phi(t,\xi)\pmb{a}^j(t,\xi)
+\mathscr{N}(t,\xi)^{-1}\pa_t\Phi(t,\xi)
\pmb{a}^j(t,\xi)\\
+\mathscr{N}(t,\xi)^{-1}\Phi(t,\xi)\partial_t \pmb{a}^j(t,\xi)\}\widehat{f}_j(\xi).
\end{multline*}
Plugging these equations into $s^\prime(t)$, we can write 
\begin{equation}\label{EQ:s-prime}
s^\prime(t)=2 \mathrm{Re} \left\langle S\widehat{U}^\prime(t,\xi),
\widehat{U}(t,\xi) \right\rangle_{\mathbb{L}^2(\Rn)} 
=2\{I(t)+J(t)\}, 
\end{equation}
where
\begin{multline*}
I(t)=\\
\mathrm{Re} \sum_{j,k=0}^{m-1}
\left\langle
S \mathscr{N}(t,\xi)^{-1}\partial_t \Phi(t,\xi)\pmb{a}^j(t,\xi)\widehat{f}_j(\xi),
\mathscr{N}(t,\xi)^{-1}\Phi(t,\xi)\pmb{a}^k(t,\xi)\widehat{f}_k(\xi)
\right\rangle_{\mathbb{L}^2(\Rn)},
\end{multline*}
\begin{multline*}
J(t)=\\
\mathrm{Re} \sum_{j,k=0}^{m-1}
\left\langle
S \pa_t \mathscr{N}(t,\xi)^{-1} \Phi(t,\xi)\pmb{a}^j(t,\xi)\widehat{f}_j(\xi),
\mathscr{N}(t,\xi)^{-1}\Phi(t,\xi)\pmb{a}^k(t,\xi)\widehat{f}_k(\xi)
\right\rangle_{\mathbb{L}^2(\Rn)}\\
+\left\langle
S \mathscr{N}(t,\xi)^{-1} \Phi(t,\xi)\pa_t \pmb{a}^j(t,\xi)\widehat{f}_j(\xi),
\mathscr{N}(t,\xi)^{-1}\Phi(t,\xi)\pmb{a}^k(t,\xi)\widehat{f}_k(\xi)
\right\rangle_{\mathbb{L}^2(\Rn)}.
\end{multline*}
Since 
\[
\int^\infty_{-\infty} 
\left\{\left\|\partial_t \mathscr{N}(t,\xi)^{-1}\right\|_{(L^\infty(\Rn\backslash0))^{m^2}}
+\left\|\partial_t \pmb{a}^j(t,\xi)\right\|_{(L^\infty(\Rn\backslash0))^m}\right\}\, dt
\le CK
\]
on account of Proposition \ref{prop:Rep}, it follows that 
\begin{equation}\label{EQ:IND-TIME}
\int^\infty_{-\infty}|J(t)|\, dt \le CK\|U_0\|^2_{\mathbb{L}^2(\Rn)}
\end{equation}
with a certain constant $C>0$. 

It remains to estimate the oscillatory integral $I(t)$.
Writing 
\[
\text{$\pmb{a}^j(t,\xi)
={}^T(a^{1j}(t,\xi),\ldots,a^{mj}(t,\xi))$ and $\mathscr{N}(t,\xi)^{-1}=(n^{lp}(t,\xi))$,}
\]
we have 
\[
I(t)=\mathrm{Re} \sum_{j,k=0}^{m-1} \, \sum_{b,l,p,q=1}^m I_{j,k;b,l,p,q}(t),
\]
where 
%\begin{multline*}
\[
I_{j,k;b,l,p,q}(t)=
i\left\langle
s_{bl}n^{lp}(t,\xi)\varphi_p(t,\xi)e^{i\vartheta_p(t,\xi)}a^{pj}(t,\xi)\widehat{f}_j,
n^{bq}(t,\xi)e^{i\vartheta_q(t,\xi)}a^{qk}(t,\xi)\widehat{f}_k
\right\rangle_{L^2(\Rn)}
\]
%\end{multline*}
with 
\[
\vartheta_p(t,\xi)=\int^t_0 \varphi_p(s,\xi)\, ds \quad (p=1,\ldots,m).
\]
Here the sum in $I_{j,k;b,l,p,q}(t)$ over $p=q$ does not contribute to $I(t)$. In fact, 
these integrals are pure imaginary. To see this fact, let us write
$$\varphi_p(t,\xi)=\varphi^+_p(t,\xi)-\varphi^-_p(t,\xi),$$ 
where $\varphi^+_p(t,\xi)$ and $\varphi^-_p(t,\xi)$ 
are positive and negative parts of $\varphi_p(t,\xi)$, respectively. 
Then we can write
\begin{align*}
& i^{-1}I_{j,k;b,l,p,p}(t)=\\
&\sum_\pm \pm \left\langle
s_{bl}n^{lp}(t,\xi)\sqrt{\varphi^\pm_p(t,\xi)}a^{pj}(t,\xi)\widehat{f}_j,n^{bp}(t,\xi)
\sqrt{\varphi^\pm_p(t,\xi)}a^{pk}(t,\xi)\widehat{f}_k
\right\rangle_{L^2(\Rn)}
\end{align*} 
and since $S$ is Hermitian, the sum
$
\sum_{j,k=0}^{m-1} \sum_{b,l,p=1}^m  i^{-1}I_{j,k;b,l,p,p}(t)
$
is real, and the real part 
$$
\mathrm{Re} \sum_{j,k=0}^{m-1} \sum_{b,l,p=1}^m I_{j,k;b,l,p,p}(t) =0
$$
vanishes. Therefore, by putting 
\[
\va_{pq}(t,\xi)=\sum_{b,l}s_{bl}n^{lp}(t,\xi)\varphi_p(t,\xi)
\overline{n^{bq}(t,\xi)},
\] 
which are positively homogeneous of order one in $\xi$, 
we can write 
\begin{align*}
I(t)=&\mathrm{Re} \sum_{p \ne q}\sum_{j,k} 
\left\langle
i\varphi_{pq}(t,\xi)e^{i\vartheta_p(t,\xi)}a^{pj}(t,\xi)\widehat{f}_j,
e^{i\vartheta_q(t,\xi)}a^{qk}(t,\xi)\widehat{f}_k
\right\rangle_{L^2(\Rn)}\\
=&-\mathrm{Im} \sum_{p \ne q} \sum_{j,k} \left\langle
\varphi_{pq}(t,\xi)e^{i\vartheta_p(t,\xi)}a^{pj}(t,\xi)\widehat{f}_j,
e^{i\vartheta_q(t,\xi)}a^{qk}(t,\xi)\widehat{f}_k
\right\rangle_{L^2(\Rn)}.
\end{align*}
Now let us consider the functional
\[
I_{p,q}(\eta(\cdot),t)=-\mathrm{Im} \sum_{j,k} 
\int_{\mathbb{S}^{n-1}}
I_{p,q,j,k}(\eta(\omega),t)\, d\sigma(\omega),
\]
where $\eta(\xi)$ is a function of homogeneous order zero, 
$d\sigma(\omega)$ is the $(n-1)$-dimensional Hausdorff measure, and we put 
\[
I_{p,q,j,k}(\eta(\omega),t)=
\int_0^\infty e^{i\eta(\omega) \rho}a^{pj}(t,\rho\omega)
\overline{a^{qk}(t,\rho\omega)}
\varphi_{pq}(t,\omega)
\widehat{f}_j(\rho\omega)
\overline{\widehat{f}_k(\rho\omega)}\rho^n\, d\rho.
\]
Furthermore, replacing $\eta(\cdot)$ in $I(\eta(\cdot),t)$ 
by a {\em real} parameter 
$\tau$, we define
\[
I^*_{p,q}(\tau)=\sup_{t\in \R}|I_{p,q}(\tau,t)|, \quad \tau\in \R.
\]
If we prove that 
\begin{equation}\label{EQ:Key est}
\sum_{p \ne q}\int^\infty_{-\infty}I^*_{p,q}(\tau)\, d\tau
\le \frac{C}{1-cK}\pmb{|}U_0\pmb{|}_{\mathscr{Y}(\Rn)},
\end{equation}
then we conclude that 
\begin{equation}\label{EQ:conclusion}
\int^\infty_{-\infty}|I(t)|\, dt
\le \frac{C}{1-cK}\pmb{|}U_0\pmb{|}_{\mathscr{Y}(\Rn)}.
\end{equation}
Indeed, since 
\begin{align*}
|I(t)|\le& C\sum_{p\ne q} \int_{\mathbb{S}^{n-1}}
|I_{p,q}(\vartheta_p(t,\omega)-\vartheta_q(t,\omega),t)|
\, d\sigma(\omega)\\
\le& C\sum_{p\ne q} \int_{\mathbb{S}^{n-1}}
I^*_{p,q}(\vartheta_p(t,\omega)-\vartheta_q(t,\omega))\, d\sigma(\omega),
\end{align*}
it follows from the Fubini-Tonnelli theorem that 
\[
\int^\infty_{-\infty}|I(t)|\, dt
\le C\sum_{p\ne q} \int_{\mathbb{S}^{n-1}}\left(
\int^\infty_{-\infty}I^*_{p,q}(\vartheta_p(t,\omega)-\vartheta_q(t,\omega))\, dt
\right)\, d\sigma(\omega).
\]
Here we note that 
\[
\inf_{t\in \R, \omega\in \mathbb{S}^{n-1}}|\va_p(t,\omega)
-\va_q(t,\omega)|\ge d(>0)
\qquad \text{for $p\ne q$,}
\]
for some $d>0$.
Then, by changing the variable $\tau_\omega
=\vartheta_{pq}(t,\omega)=\vartheta_p(t,\omega)-\vartheta_q(t,\omega)$, 
and by using \eqref{EQ:Key est},
we can estimate 
\begin{align*}
&\sum_{p\ne q}
\int_{\mathbb{S}^{n-1}}\left(
\int^\infty_{-\infty}I^*_{p,q}(\vartheta_p(t,\omega)-\vartheta_q(t,\omega))\, dt
\right)\, d\sigma(\omega)\\
= & \sum_{p\ne q} \int_{\mathbb{S}^{n-1}}\left(\int^\infty_{-\infty}
\frac{1}{\va_p(\vartheta^{-1}_{pq}(\tau_\omega,\omega))-
\va_q(\vartheta^{-1}_{pq}(\tau_\omega,\omega))}
I^*_{p,q}(\tau_\omega)\, d\tau_\omega
\right)\, d\sigma(\omega)\\
\le& d^{-1} \sum_{p\ne q} \int_{\mathbb{S}^{n-1}}\left(\int^\infty_{-\infty}
I^*_{p,q}(\tau_\omega)\, d\tau_\omega
\right)\, d\sigma(\omega)\\
\le& \frac{C}{1-cK}\pmb{|}U_0\pmb{|}_{\mathscr{Y}(\Rn)},
\end{align*}
which implies the estimate \eqref{EQ:conclusion}.

We now turn to prove the estimate \eqref{EQ:Key est}.
Recall the Picard series \eqref{EQ:Picard} for $a^{pj}(t,\xi)$; 
since $a^{pj}(0,\xi)=n_{pj}(0,\xi)$, it follows that 
\begin{multline*}
a^{pj}(t,\xi)=
n_{pj}(0,\xi)+i\int^t_0 c^p_{l_1}(\tau_1,\xi) n_{l_1 j}(0,\xi)\, d \tau_1\\
+i^2\int^{t}_0 c^p_{l_1}(\tau_1,\xi) \, 
d \tau_1\int^{\tau_1}_0 
c^{l_1}_{l_2}(\tau_2,\xi)n_{l_2 j}(0,\xi)\, d \tau_2+\cdots,
\end{multline*}
where each entry $c^l_k(t,\xi)$ of $C(t,\xi)$ is of the form 
$n^{lq}(t,\xi)\pa_t n_{pk}(t,\xi) e^{i\vartheta_{pq}(t,\xi)}$, and 
the behaviour of $n^{lq}(t,\xi)$ is similar to that of 
$n_{pk}(t,\xi)$. In the sequel 
we omit the indices of $\vartheta_{pq}(t,\xi)$, $n_{pk}(t,\xi)$ and 
$n^{lq}(t,\xi)$. Then 
$\vartheta(t,\xi)$ is positively homogeneous of order one and 
$n(t,\xi)$ is homogeneous of order zero in $\xi$ satisfying    
\begin{equation}\label{EQ:K-bound}
\int^\infty_{-\infty}
\|\pa_t n(t,\cdot)\|_{L^\infty(\Rn\backslash0)}
\, dt\le CK.
\end{equation}
Plugging these series into $I_{p,q,j,k}(\tau,t)$, we 
extract the integrals depending on $K^j$ for $j=0,1,2,\ldots$,
and estimate as follows: \\

\noindent 
(i) {\em Integrals independent of $K$} are easily handled by  the estimate
\[
\int^\infty_{-\infty}\left|\int_\Rn e^{i\tau|\xi|}
n(t,\xi)^2\va_{pq}(t,\xi)\widehat{f}_j(\xi)
\overline{\widehat{f}_k(\xi)}\, d\xi\right|
\, d\tau
\le C\pmb{|} U_0\pmb{|}_{\mathscr{Y}(\Rn)}.
\] 
Indeed, making the change of variable $\xi=\rho\omega$ 
$(\rho=|\xi|$, $\omega=\xi/|\xi|\in \mathbb{S}^{n-1}$),
the left-hand side becomes 
\begin{align*}
& \int^\infty_{-\infty}\left|\int_{\mathbb{S}^{n-1}}
n(t,\omega)^2\va_{pq}(t,\omega)
\left(\int^\infty_0 e^{i\tau \rho}
\widehat{f}_j(\rho\omega)
\overline{\widehat{f}_k(\rho\omega)}\rho^n \, d\rho \right)
d\sigma(\omega)\right|\, d\tau\\
\le& C\int^\infty_{-\infty}\left(\int_{\mathbb{S}^{n-1}}
\left|\int^\infty_0 e^{i\tau \rho}
\widehat{f}_j(\rho\omega)
\overline{\widehat{f}_k(\rho\omega)}\rho^n \, d\rho \right|
d\sigma(\omega)\right)\, d\tau
\le C\pmb{|} U_0\pmb{|}_{\mathscr{Y}(\Rn)},
\end{align*}
since $|n(t,\omega)^2\va_{pq}(t,\omega)|\le C$ for all $t\in \R$.

\bigskip 

\noindent 
(ii) {\em Integrals depending on $K$} are reduced to the 
following: 
\[
\int^\infty_{-\infty}\left|\int_\Rn e^{i\tau|\xi|}
i\left(\int^t_0 \pa_{\tau_1} 
n(\tau_1,\xi)e^{i\vartheta(\tau_1,\xi)} \, d \tau_1
\right)
n(0,\xi)\va_{pq}(t,\xi)\widehat{f}_j(\xi)
\overline{\widehat{f}_k(\xi)}\, d\xi\right|\, d\tau.
\]
Making the change of variable $\xi=\rho\omega$,
using the bounds $|n(0,\omega)\va_{pq}(t,\omega)|
\le C$ and Fubini's theorem, we estimate the above integrals as 
%\begin{multline*}
\[
C\int^\infty_{-\infty} 
\int_{\mathbb{S}^{n-1}} 
\left(\int^t_0 |\pa_{\tau_1} n(\tau_1,\omega)|
\left|
\int^\infty_0 e^{i(\tau+\vartheta(\tau_1,\omega))\rho}
\widehat{f}_j(\rho\omega)
\overline{\widehat{f}_k(\rho\omega)}\rho^n\, d\rho
\right|\, d \tau_1\right) \,
d\sigma(\omega) d\tau.
\]
%\end{multline*}
Since $U_0\in \mathscr{Y}(\Rn)$, resorting to the invariance property of 
Lebesgue integrals with respect to the measure $d\tau$ 
and estimate \eqref{EQ:K-bound}, we conclude that 
the above integrals can be estimated as   
\begin{multline*}
C\int^t_0 
\|\pa_{\tau_1} n(\tau_1,\cdot)\|_{L^\infty(\mathbb{S}^{n-1})}\, d\tau_1 
\times \\
\int^\infty_{-\infty}\left(\int_{\mathbb{S}^{n-1}} 
\left|\int^\infty_0 e^{i\tau \rho}
\widehat{f}_j(\rho\omega)
\overline{\widehat{f}_k(\rho\omega)}\rho^n\, d\rho
\right| \, d\sigma(\omega)\right)\, d\tau\\
\le CK\pmb{|} U_0\pmb{|}_{\mathscr{Y}(\Rn)}.
\end{multline*}

\bigskip 
\noindent 
(iii) {\em In the integrals depending on $K^j$ for $j\ge2$}, 
the factors 
\[
e^{i\tau\rho}\underbrace{\int \cdots \int}_j \, \pa_{\tau_1} 
n_1(\tau_1,\omega)e^{i\vartheta_1(\tau_1,\rho\omega)}
\cdots
\pa_{\tau_j} n_j(\tau_j,\omega)e^{i\vartheta_j(\tau_j,\rho\omega)}\, d\tau_1 \cdots d\tau_j
\]
appear. Writing oscillatory factors as  
$e^{i(\tau+\vartheta(\tau_1,\omega)+\cdots+\vartheta(\tau_j,\omega))\rho}$, 
one can also handle such a factor by the invariance 
property of Lebesgue integrals. As a result, 
by using estimate \eqref{EQ:K-bound}, the present terms are bounded by 
$c^jK^j \pmb{|} U_0\pmb{|}_{\mathscr{Y}(\Rn)}$ for
some constant $c>0$.\\

Summing up these integrals and noting that $0<K<1$, we arrive at the estimate 
\eqref{EQ:Key est}, namely at
\[
\int^\infty_{-\infty}I^*(\tau)\, d\tau
\le C(1+cK+c^2K^2+\cdots)\pmb{|} U_0\pmb{|}_{\mathscr{Y}(\Rn)}
=\frac{C}{1-cK}\pmb{|} U_0\pmb{|}_{\mathscr{Y}(\Rn)},
\]
provided that $0< K\leq K_0$ for some sufficiently small constant
$K_0>0$.
In conclusion, by combining \eqref{EQ:IND-TIME} and \eqref{EQ:conclusion}, 
we get \eqref{core3}.
The proof of Lemma \ref{lem:Kirchhoff} is now finished. 
%\qed
\end{proof}

\begin{proof}[Proof of Theorem \ref{thm:Kirchhoff}]
We employ the Schauder-Tychonoff fixed point theorem. Let 
$A(t,\xi) \in \mathscr{K}$, and we fix 
the data $U_0\in \mathbb{L}^2(\Rn)\cap \mathscr{Y}(\Rn)$. Then it follows 
from Lemma~\ref{lem:Kirchhoff} that the mapping 
\[ \Theta : A(t,\xi)\mapsto A(s(t),\xi) \] 
maps $\mathscr{K}=\mathscr{K}(\Lambda,K)$ into itself provided that 
$\|U_0\|^2_{\mathbb{L}^2(\Rn)}+\|U_0\|_{\mathscr{Y}(\Rn)}$ 
is sufficiently small, with
$\Lambda>2\| A(0,\xi/|\xi|)\|_{(L^\infty(\Rn\backslash 0))^{m^2}}$
and sufficiently small $0<K<K_0$.
Now $\mathscr{K}$ may be regarded as the convex subset 
of the Fr\'echet space $L^{\infty}_{\mathrm{loc}}(\R;(L^\infty(\Rn\backslash0))^{m^2})$, and we endow 
$\mathscr{K}$ with the induced topology. 
We shall show that $\mathscr{K}$ is 
compact in $L^{\infty}_{\mathrm{loc}}(\R;(L^\infty(\Rn\backslash0))^{m^2})$ and the mapping $\Theta$ 
is continuous on $\mathscr{K}$. \\

{\em Compactness of $\mathscr{K}$.} 
Since $\mathscr{K}$ is uniformly bounded and
equi-continuous on every compact $t$-interval, one can deduce from
the Ascoli-Arzel\`a theorem that $\mathscr{K}$
is relatively compact in
$L^{\infty}_{\mathrm{loc}}(\R;(L^\infty(\Rn\backslash0))^{m^2})$, and it is sequentially compact.
This means that every sequence $\{ A_j(t,\xi/|\xi|) \}_{j=1}^\infty$
in $\mathscr{K}$ has a subsequence, denoted by the same,
converging to some $A(\cdot,\xi/|\xi|)\in 
{\mathrm{Lip}}_{\mathrm{loc}}(\mathbb{R};(L^\infty(\Rn\backslash0))^{m^2})$:
\[
\left\{
\begin{aligned}
&A_j(t,\xi/|\xi|) \underset{(j\to\infty)}{\to}
A(t,\xi/|\xi|)
\quad
\text{in $L^{\infty}_{\mathrm{loc}}(\R;(L^\infty(\Rn\backslash0))^{m^2})$,}\\
& \|A(t,\xi/|\xi|)\|_{L^{\infty}_{\mathrm{loc}}(\R;(L^\infty(\Rn\backslash0))^{m^2})} \le \Lambda,
\end{aligned}\right.
\]
where we used the fact that the absolute continuity
of $\{A_j(t,\xi/|\xi|)\}$ is uniform in $j$
on account of the Vitali-Hahn-Saks theorem (see e.g.,
\S 2 in Chapter II from \cite{Yosida}),
since the finite limit
$\lim_{j\to \infty}\int^t_s \pa_\tau A_j(\tau,\xi/|\xi|)\, d\tau$ exists
for every interval $(s,t)$.
Moreover, the derivative $\pa_t A(t,\xi/|\xi|)$
exists almost everywhere on $\R$.
Now, for the derivative $\pa_t A(t,\xi/|\xi|)$,
if we prove that
\begin{equation}\label{EQ:limsup}
\int_{-\infty}^{+\infty}\|\pa_t A(t,\xi/|\xi|)\|_{(L^\infty(\Rn\backslash0))^{m^2}}
\, dt\le K,
\end{equation}
then $A(t,\xi/|\xi|) \in \mathscr{K}$, which proves the compactness of $\mathscr{K}$.

For the proof of the estimate \eqref{EQ:limsup}, we observe from Theorem 4 in \S1
of Chapter V of \cite{Yosida}
that the sequence 
$\{\pa_t A_j(\cdot,\xi/|\xi|)\}$ converges weakly to some matrix-valued function
$B(\cdot,\xi/|\xi|)\in L^1(\R;(L^\infty(\Rn\backslash0))^{m^2})$ as $j\to\infty$, since the finite limit
$\lim_{j\to \infty}\int^t_s \pa_\tau A_j(\tau,\xi/|\xi|)\, d\tau$ exists
for every interval $(s,t)$ and $\{\pa_t A_j(\cdot,\xi/|\xi|)\}$ is uniformly bounded
in $L^1(\R;(L^\infty(\Rn\backslash0))^{m^2})$:
\begin{equation}\label{EQ:Calculus}
\int_{-\infty}^\infty \|\pa_t A_j(t,\xi/|\xi|)\|_{(L^\infty(\Rn\backslash0))^{m^2}}\, dt\le K.
\end{equation}
By standard arguments we can
conclude that $\pa_t A(t,\xi/|\xi|)=B(t,\xi/|\xi|)$ for a.e. $t\in \R$.
Hence
\eqref{EQ:limsup} is true, since
\[
\int_{-\infty}^{+\infty}\|\pa_t A(t,\xi/|\xi|)\|_{(L^\infty(\Rn\backslash0))^{m^2}}\, dt
\le \liminf_{j\to\infty}
\int_{-\infty}^{+\infty}\|\pa_t A_j(t,\xi/|\xi|)\|_{(L^\infty(\Rn\backslash0))^{m^2}}\, dt\le K,
\]
where we used \eqref{EQ:Calculus}. \\

{\em Continuity of $\Theta$ on $\mathscr{K}$.} 
Let us take a sequence $\{ A_k(t,\xi/|\xi|) \}$ in 
$\mathscr{K}$ such that 
\begin{equation}\label{conv} 
\text{$A_k(t,\xi/|\xi|) \to A(t,\xi/|\xi|) \in \mathscr{K}$ 
\quad in 
$L^{\infty}_{\mathrm{loc}}(\R;(L^\infty(\Rn\backslash0))^{m^2})$ \quad $(k \to \infty)$,} 
\end{equation}
and let $U_k(t,x)$ and $U(t,x)$ be the corresponding solutions to $A_k(t,\xi)$ 
and $A(t,\xi)$, respectively, 
with fixed data $U_0$ satisfying the assumption 
of Theorem \ref{thm:Kirchhoff}. Put 
\begin{equation}\label{EQ:s-eq}
s_k(t)=\langle SU_k(t),U_k(t)\rangle_{\mathbb{L}^2(\Rn)}.
\end{equation}
Then we prove that the images 
\[
\text{$A_k(s_k(t),\xi)=\Theta (A_k(t,\xi))$ 
and $A(s(t),\xi)=\Theta(A(t,\xi))$}
\]
satisfy 
\begin{equation}\label{convergence} 
\text{$A_k(s_k(t),\xi/|\xi|) \to A(s(t),\xi/|\xi|)$ \quad in 
$L^{\infty}_{\mathrm{loc}}(\R;(L^\infty(\Rn\backslash0))^{m^2})$ \quad $(k \to \infty)$.} 
\end{equation} 
Using Proposition \ref{prop:Rep} again, 
we can write 
\[
U(t,x)=\sum_{j=0}^{m-1} 
\mathscr{F}^{-1} \left[\mathscr{N}(t,\xi)^{-1}\Phi(t,\xi)
\pmb{a}^j(t,\xi)\widehat{f}_j(\xi) \right](x), 
\]
\[
U_k(t,x)=\sum_{j=0}^{m-1} 
\mathscr{F}^{-1} \left[\mathscr{N}_k(t,\xi)^{-1}
\Phi_k(t,\xi)
\pmb{a}_k^j(t,\xi)\widehat{f}_j(\xi) \right](x).
\]
Notice that 
\begin{equation}\label{EQ:Phi-eq}
\Phi_k(t,\xi)\to \Phi(t,\xi) \quad 
\text{in $L^{\infty}_{\mathrm{loc}}(\R;(L^\infty(\Rn\backslash0))^{m^2})$} \quad (k\to\infty),
\end{equation}
\begin{equation}
\mathscr{N}_k(t,\xi)^{-1}\to 
\mathscr{N}(t,\xi)^{-1} \quad 
 \text{in $L^{\infty}_{\mathrm{loc}}(\R;(L^\infty(\Rn\backslash0))^{m^2})$} \quad 
(k\to\infty)
\end{equation}
on account of \eqref{conv}. Furthermore, we have  
\begin{equation}\label{EQ:Convergence}
\pmb{a}_k^j(t,\xi)\to 
\pmb{a}^j(t,\xi) \quad 
 \text{in $L^{\infty}_{\mathrm{loc}}(\R;(L^\infty(\Rn\backslash0))^m)$} \quad 
(k\to\infty).
\end{equation}
Indeed, we observe from previous argument that 
$\{\pa_t A_k(t,\xi/|\xi|)\}$ is weakly convergent to $\pa_t A(t,\xi/|\xi|)$
in $L^1(\R;(L^\infty(\Rn\backslash0))^{m^2})$, and hence, 
$\{\pa_t \mathscr{N}_k(t,\xi)\}$ is also weakly convergent to 
$\pa_t \mathscr{N}(t,\xi/|\xi|)$
in $L^1(\R;(L^\infty(\Rn\backslash0))^{m^2})$. 
Thus we find from this observation and the Picard series \eqref{EQ:Picard} for 
$\pmb{a}_k^j(t,\xi)$ that the convergence \eqref{EQ:Convergence} is proved. 
Then, by using the Lebesgue dominated convergence theorem, 
we conclude from \eqref{EQ:s-eq}--\eqref{EQ:Convergence} 
that $s_k(t)\to s(t)$ $(k\to\infty)$, which implies 
\eqref{convergence}. \\

{\em Completion of the proof of Theorem \ref{thm:Kirchhoff}.} 
By using the Schauder--Tychonoff fixed point theorem, we can show 
that $\Theta$ has a fixed point in $\mathscr{K}$, 
with $K_0>0$ in Lemma \ref{lem:Kirchhoff} sufficiently
small, so that constants in \eqref{core1}-\eqref{core2} are positive.
Hence, we conclude that if $U_0\in \mathbb{L}^2(\Rn)\cap \mathscr{Y}(\Rn)$,
then the solutions $U(t,x)$ of 
$$D_tU=A(t,D_x)U$$ with the Cauchy data $U(0,x)=U_0(x)$ are solutions to the nonlinear system  
\eqref{EQ:Kirchhoff systems} and belong to $C(\R;\mathbb{L}^2(\Rn))$. 
Furthermore, these solutions $U$ satisfy the energy estimates \eqref{EQ:Energy}.

Finally, we prove the uniqueness.
Let $U,V$ be two solutions 
to the nonlinear system \eqref{EQ:Kirchhoff systems} with 
$U(0,x)=V(0,x)=U_0(x)$, 
and let  
$$
s_U(t)=\langle SU(t),U(t)\rangle_{\mathbb{L}^2(\Rn)}\quad\textrm{ and }\quad
s_V(t)=\langle SV(t),V(t)\rangle_{\mathbb{L}^2(\Rn)}$$
be the corresponding nonlocal terms, respectively. In this time, we need to assume that 
$U_0\in\mathbb{H}^1(\Rn)$.
We observe from Proposition \ref{prop:Rep} and the property of the mapping 
$\Theta$ in the fixed point argument that, when we consider the integral representations of 
$U$ and $V$,  the functions 
$\Phi(t,\xi)$ and $\mathscr{N}(t,\xi)$ in the representation \eqref{EQ:Representation of U}
may be replaced by 
\[
\text{$\Phi(s(t),\xi)=\mathrm{diag}\left(e^{i\int^t_0 \varphi_1(s(\tau),\xi)\, d\tau},\cdots,
e^{i\int^t_0 \varphi_m(s(\tau),\xi)\, d\tau} \right)$
and $\mathscr{N}(s(t),\xi)$,}
\]
respectively, where $s(t)=s_U(t)$ or $s_V(t)$. Since $\pmb{a}^j(t,\xi)$ 
are solutions to the linear system 
$D_t\pmb{a}^j(t,\xi)=C(t,\xi)\pmb{a}^j(t,\xi)$, 
where 
$$C(t,\xi)=\Phi(t,\xi)^{-1} (D_t \mathscr{N}(t,\xi))
\mathscr{N}(t,\xi)^{-1}\Phi(t,\xi),
$$
the amplitude functions $\pmb{a}^j(s,\xi)$ 
for nonlinear system  
satisfy ordinary differential systems
\begin{equation}\label{EQ:Amp-nonlinear}
D_t\pmb{a}^j(s(t),\xi)=C(s(t),\xi)\pmb{a}^j(s(t),\xi),
\end{equation}
where 
$$C(s(t),\xi)=\Phi(s(t),\xi)^{-1} (D_t \mathscr{N}(s(t),\xi))
\mathscr{N}(s(t),\xi)^{-1}\Phi(s(t),\xi)
$$
are in $L^1(\R;(L^\infty(\Rn\backslash0))^{m^2})$.
Thus the solutions $U,V$ of  \eqref{EQ:Kirchhoff systems} have 
the following forms:
\begin{equation*} %\label{EQ:U-REP}
U(t,x)=\sum_{j=0}^{m-1} 
\mathscr{F}^{-1} \left[\mathscr{N}(s_U(t),\xi)^{-1}\Phi(s_U(t),\xi)
\pmb{a}^j(s_U(t),\xi)\widehat{f}_j(\xi) \right](x), 
\end{equation*}
\begin{equation*} %\label{EQ:V-REP}
V(t,x)=\sum_{j=0}^{m-1} 
\mathscr{F}^{-1} \left[\mathscr{N}(s_V(t),\xi)^{-1}\Phi(s_V(t),\xi)
\pmb{a}^j(s_V(t),\xi)\widehat{f}_j(\xi) \right](x).
\end{equation*}
Then we can write 
\begin{align} \label{EQ:difference}
&\|U(t)-V(t)\|^2_{\mathbb{L}^2(\Rn)}\\
=&\sum_{j,k=0}^{m-1}\int_\Rn \left(\pmb{b}^j(s_U(t),\xi)-\pmb{b}^j(s_V(t),\xi)\right)
\cdot \overline{\left(\pmb{b}^k(s_U(t),\xi)-\pmb{b}^k(s_V(t),\xi)\right)}\widehat{f}_j(\xi)
\overline{\widehat{f}_k(\xi)}\, d\xi,
\nonumber
\end{align}
where we put
\[
\pmb{b}^j(s_U(t),\xi)=\mathscr{N}(s_U(t),\xi)^{-1}\Phi(s_U(t),\xi)
\pmb{a}^j(s_U(t),\xi),
\]
\[
\pmb{b}^j(s_V(t),\xi))
=\mathscr{N}(s_V(t),\xi)^{-1}\Phi(s_V(t),\xi)
\pmb{a}^j(s_V(t),\xi).
\]
The functional $s_U(t)$ is Lipschitz with respect to $U$ 
since 
\begin{align}\nonumber
|s_U(t)-s_V(t)|\le& 
|\langle S(U(t)-V(t)),U(t)\rangle_{\mathbb{L}^2(\Rn)}|
+|\langle SV(t),U(t)-V(t)\rangle_{\mathbb{L}^2(\Rn)}|\\
\le& C\|U_0\|_{\mathbb{L}^2(\Rn)}\|U(t)-V(t)\|_{\mathbb{L}^2(\Rn)}.
\label{EQ:Lip1}
\end{align}
Since $A(s,\xi/|\xi|)$ is Lipschitz with respect to $s$, 
$\mathscr{N}(s,\xi)^{-1}$ and $\va_k(s,\xi)$ 
also depend on $s$ Lipschitz continuously; thus we find 
from \eqref{EQ:Lip1} that 
\begin{align} \label{EQ:N-Lip}
& \left\|\mathscr{N}(s_U(t),\xi)^{-1}-\mathscr{N}(s_V(t),\xi)^{-1}
\right\|_{(L^\infty(\Rn\backslash0))^{m^2}}\\
\le& C|s_U(t)-s_V(t)|
\le C\|U_0\|_{\mathbb{L}^2(\Rn)}\|U(t)-V(t)\|_{\mathbb{L}^2(\Rn)},
\nonumber
\end{align}
\begin{align} \label{EQ:Ph-Lip}
& \left\|\Phi(s_U(t),\xi)-\Phi(s_V(t),\xi)\right\| 
\le \sum_{k=1}^m \left| e^{i\int_0^t\varphi_k(s_U(\tau),\xi)\, d\tau}-
e^{i\int_0^t\varphi_k(s_V(\tau),\xi)\, d\tau}\right|
\\
\le& \sum_{k=1}^m\int_0^t |\varphi_k(s_U(\tau),\xi)-\varphi_k(s_V(\tau),\xi)|
\, d\tau
\nonumber\\
\le& C|\xi|\int^t_0 |s_U(\tau)-s_V(\tau)|\, d\tau
\nonumber \\
\le& C|\xi|\|U_0\|_{\mathbb{L}^2(\Rn)}
\int^t_0 \|U(\tau)-V(\tau)\|_{\mathbb{L}^2(\Rn)}\, d\tau,
\nonumber
\end{align}
where $\|\cdot \|$ denotes a matrix norm.
Furthermore, the amplitude functions $\pmb{a}^j(s,\xi)$ satisfy the 
following estimates:
\begin{equation}\label{EQ:a-Lip}
\|\pmb{a}^j(s_U(t),\xi)-\pmb{a}^j(s_V(t),\xi)\|_{(L^\infty(\Rn\backslash0))^m}
\le C\|U_0\|_{\mathbb{L}^2(\Rn)}\|U(t)-V(t)\|_{\mathbb{L}^2(\Rn)}.
\end{equation}
In fact, since $\pmb{a}^j(s,\xi)$ satisfy the ordinary differential system 
\eqref{EQ:Amp-nonlinear} with 
$C(s,\xi)\in L^1((0,\delta);(L^\infty(\Rn\backslash0))^{m^2})$,
it follows that 
$$D_s\pmb{a}^j(s,\xi)=C(s,\xi)\pmb{a}^j(s,\xi),$$
and hence, 
$\pmb{a}^j(s,\xi)$ are Lipschitz in $s$. Therefore, there exists a constant $L>0$ 
such that 
\begin{align*}
& \|\pmb{a}^j(s_U(t),\xi)-\pmb{a}^j(s_V(t),\xi)\|_{(L^\infty(\Rn\backslash0))^m}\\
\le& L|s_U(t)-s_V(t)|\\
\le& C\|U_0\|_{\mathbb{L}^2(\Rn)}\|U(t)-V(t)\|_{\mathbb{L}^2(\Rn)},
\end{align*}
where we used \eqref{EQ:Lip1} in the last step. This proves \eqref{EQ:a-Lip}.
Summarising \eqref{EQ:N-Lip}--\eqref{EQ:a-Lip}, we conclude that
\begin{align*}
& |\pmb{b}^j(s_U(t),\xi)-\pmb{b}^j(s_V(t),\xi)|\\
\le& C\|U_0\|_{\mathbb{L}^2(\Rn)}\left(\|U(t)-V(t)\|_{\mathbb{L}^2(\Rn)}
+|\xi|
\int^t_0 \|U(\tau)-V(\tau)\|_{\mathbb{L}^2(\Rn)}\, d\tau \right).
\end{align*}
Thus, \eqref{EQ:difference} together with these estimates imply that
\begin{multline*}
\|U(t)-V(t)\|^2_{\mathbb{L}^2(\Rn)}
\le C\Bigg\{ \|U_0\|^4_{\mathbb{L}^2(\Rn)}\|U(t)-V(t)\|^2_{\mathbb{L}^2(\Rn)}\\
\left. +\|U_0\|^2_{\mathbb{L}^2(\Rn)}\|D_x U_0\|^2_{\mathbb{L}^2(\Rn)}\left(\int^t_0 \|U(\tau)-V(\tau)\|_{\mathbb{L}^2(\Rn)}
\, d\tau\right)^2\right\}.
\end{multline*}
Since $\|U_0\|_{\mathbb{L}^2(\Rn)}$ is sufficiently small, we obtain 
\[
\|U(t)-V(t)\|_{\mathbb{L}^2(\Rn)}
\le C(\|U_0\|_{\mathbb{L}^2(\Rn)})
\|D_x U_0\|_{\mathbb{L}^2(\Rn)}\int^t_0 \|U(\tau)-V(\tau)\|_{\mathbb{L}^2(\Rn)}\, d\tau
\]
for some function $C(\|U_0\|_{\mathbb{L}^2(\Rn)}).$
Thus, applying Gronwall's lemma to the above inequality, we conclude that
$U(t)=V(t)$ for all $t\in\R$. This proves the uniqueness of solutions. 
The proof of Theorem \ref{thm:Kirchhoff} is now finished. 
\end{proof}

%%%%%%%%%%%%%%%%%%%%%%%%%%%%%%%%%%%%%%%%%%%%%%%%%%%%%%%%%%%%%%%%%%%%%%
%%%%%%%%%%%%%%%%%%%% Section 5. Final remark %%%%%%%%%%%%%%%%%%%%%%%%%%%%%%%%%%%%
%%%%%%%%%%%%%%%%%%%%%%%%%%%%%%%%%%%%%%%%%%%%%%%%%%%%%%%%%%%%%%%%%%%%%%

\section{A final remark}
Observing the inclusion \eqref{EQ:weight}, we can also prove:
% Theorem 5.1
\begin{thm}\label{thm:weight-Kirchhoff}
Let $n\ge1$ and $\varkappa\in (1,n+1]$. 
Assume that system \eqref{EQ:Kirchhoff systems} is strictly hyperbolic, and that 
$A(s,\xi)=(a_{jk}(s,\xi))_{j,k=1}^m$ is an $m\times m$ matrix, positively 
homogeneous of order one in $\xi$, whose entries $a_{jk}(s,\xi)$ satisfy 
$|\xi|^{-1+|\alpha|} \pa^\alpha_\xi a_{jk}(s,\xi)\in 
\mathrm{Lip}([0,\delta];L^\infty(\Rn\backslash0))$ for any $0\le |\alpha|\le [\varkappa]+1$ and 
for some $\delta>0$. 
If $U_0$ are small in the space $\mathbb{H}^1_\varkappa(\mathbb{R}^n)$,
then system \eqref{EQ:Kirchhoff systems} has a unique solution 
$U\in C(\mathbb{R};\mathbb{H}^1(\mathbb{R}^n))\cap 
C^1(\mathbb{R};\mathbb{L}^2(\mathbb{R}^n))$.
\end{thm}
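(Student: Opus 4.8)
The plan is to obtain Theorem~\ref{thm:weight-Kirchhoff} as a corollary of Theorem~\ref{thm:Kirchhoff}, the bridge being the embedding \eqref{EQ:weight}. First I would observe that the hypothesis imposed on $A$ here is stronger than the one in Theorem~\ref{thm:Kirchhoff}: taking $\alpha=0$ in ``$|\xi|^{-1+|\alpha|}\pa^\alpha_\xi a_{jk}(s,\xi)\in\mathrm{Lip}([0,\delta];L^\infty(\Rn\backslash0))$'' recovers exactly $a_{jk}(s,\xi/|\xi|)\in\mathrm{Lip}([0,\delta];L^\infty(\Rn\backslash0))$, since $a_{jk}$ is positively homogeneous of order one in $\xi$; strict hyperbolicity is assumed directly. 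Hence the hypotheses of Theorem~\ref{thm:Kirchhoff} on the symbol are in force, and only the data have to be examined.

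Next I would record the continuous inclusions
\[
\mathbb{H}^1_\varkappa(\Rn)\subset\mathbb{L}^2(\Rn)\cap\mathbb{H}^1(\Rn)\cap\mathscr{Y}(\Rn),\qquad\varkappa>1.
\]
The inclusions into $\mathbb{L}^2(\Rn)$ and $\mathbb{H}^1(\Rn)$ are immediate because multiplication by $\jp{x}^{-\varkappa}$ is bounded on $H^1(\Rn)$ for $\varkappa\ge0$; the inclusion into $\mathscr{Y}(\Rn)$ is \eqref{EQ:weight}, which rests on Lemma~A.1 of \cite{Dancona2}. Since both $\|\cdot\|^2_{\mathbb{L}^2(\Rn)}$ and $\pmb{|}\cdot\pmb{|}_{\mathscr{Y}(\Rn)}$ are homogeneous of degree two in $U_0$ (the latter because its integrand is built from the products $\widehat f_j\overline{\widehat f_k}$), there is a constant $C=C(n,\varkappa)>0$ with
\[
\|U_0\|^2_{\mathbb{L}^2(\Rn)}+\pmb{|}U_0\pmb{|}_{\mathscr{Y}(\Rn)}\le C\,\|U_0\|^2_{\mathbb{H}^1_\varkappa(\Rn)}.
\]
Consequently, if $\|U_0\|_{\mathbb{H}^1_\varkappa(\Rn)}$ is small enough, then \eqref{EQ:small} holds and $U_0\in\mathbb{H}^1(\Rn)$, so the second part of Theorem~\ref{thm:Kirchhoff} delivers a unique solution $U\in C(\R;\mathbb{H}^1(\Rn))\cap C^1(\R;\mathbb{L}^2(\Rn))$ of \eqref{EQ:Kirchhoff systems}--\eqref{EQ:nonlocal}, which is precisely the claim.

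Since the argument is just a reduction, there is no genuine analytic obstacle; the only items needing a moment's care are the degree-two homogeneity bookkeeping in the smallness transfer and the fact that the higher $\xi$-derivative hypotheses on $A$ are not actually consumed by this proof. They are stated because they hold automatically in the differential and in the typical pseudo-differential applications, and because they would be the natural input for the sharper statement that the weight $\jp{x}^\varkappa$ is propagated, i.e.\ $U\in C(\R;\mathbb{H}^1_\varkappa(\Rn))$ --- a refinement we do not pursue here. Finally, the restriction $\varkappa\le n+1$ costs nothing: for $\varkappa>n+1$ one has $\mathbb{H}^1_\varkappa(\Rn)\subset\mathbb{H}^1_{n+1}(\Rn)$, so the case $\varkappa=n+1$ already covers such data.
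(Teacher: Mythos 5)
Your reduction is correct, but it is genuinely different from the paper's own proof. You deduce Theorem \ref{thm:weight-Kirchhoff} from Theorem \ref{thm:Kirchhoff} via the embedding \eqref{EQ:weight}: since (for $\varkappa\in(1,n+1]$) Lemma \ref{lem:Spagnolo} (i.e.\ Lemma A.1 of D'Ancona--Spagnolo) gives the quantitative bound $\pmb{|}U_0\pmb{|}_{\mathscr{Y}(\Rn)}\le C\|U_0\|^2_{\mathbb{H}^1_\varkappa(\Rn)}$ after integrating $\langle\tau\rangle^{-\varkappa}$ in $\tau$, smallness in $\mathbb{H}^1_\varkappa$ yields \eqref{EQ:small} together with $U_0\in\mathbb{H}^1$, and the second part of Theorem \ref{thm:Kirchhoff} gives exactly the stated existence and uniqueness; the $\alpha=0$ case of your hypothesis recovers the Lipschitz assumption of Theorem \ref{thm:Kirchhoff} by homogeneity. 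One caveat: the step ``both functionals are homogeneous of degree two, hence there is a constant'' is not by itself a proof (a set inclusion plus homogeneity does not yield a bound); the constant must come from the quantitative oscillatory-integral estimate, which you do cite, so this is a matter of presentation rather than a gap. The paper, by contrast, does not reduce to Theorem \ref{thm:Kirchhoff}: it re-runs the Schauder--Tychonoff argument in the refined class $\mathscr{K}^\prime$, proving via Lemma \ref{lem:weight-Kirchhoff} (integration by parts in $\rho$, bounds on $\pa^\alpha_\xi\mathscr{N}$, Lemma \ref{lem:Spagnolo}) that $\partial_t[A(s(t),\xi)]$ decays pointwise like $\langle t\rangle^{-\varkappa}$; this is precisely where the hypotheses on $\pa^\alpha_\xi a_{jk}$ for $|\alpha|\le[\varkappa]+1$ and the restriction $\varkappa\le n+1$ are consumed. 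Your route buys brevity and shows that the bare statement needs only the $\alpha=0$ hypothesis; the paper's route buys the additional decay information on $s^\prime(t)$ (of independent interest, e.g.\ for scattering and the exterior-domain results it cites), which the statement itself does not record.
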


\noindent 
{\em Outline of the proof.} In order to prove the theorem, let us introduce a 
subclass of $\mathscr{K}$ as follows: \\

\noindent 
{\bf Class $\mathscr{K}^\prime$.} {\em 
Given three constants $\Lambda>0$, $K>0$ and $\varkappa>1$, 
we say that the symbol $A(t,\xi)$    
of a pseudo-differential operator $A(t,D_x)$ 
belongs to 
$\mathscr{K}^\prime=\mathscr{K}^\prime(\Lambda,K,\varkappa)$ if $A(t,\xi)$ belongs to 
$\mathrm{Lip}_{\mathrm{loc}}(\mathbb{R};(C^{[\varkappa]+1}(\Rn\backslash0))^{m^2})$ and satisfies
\[
\|A(t,\xi/|\xi|)\|_{L^\infty(\R;(L^\infty(\Rn\backslash0))^{m^2})} \le \Lambda,
\]
\[
\left\| |\xi|^{-1+|\alpha|}\pa^\alpha_\xi \partial_t A(t,\xi)
\right\|_{(L^\infty(\Rn\backslash0))^{m^2}}
\le C_\alpha K\langle t\rangle^{-\varkappa}, 
\quad 0\le \forall |\alpha|\le [\varkappa]+1. 
\] 
}

We have the following lemma:
% Lemma 5.2
\begin{lem}\label{lem:weight-Kirchhoff}
Let $n\ge1$ and $1<\varkappa\le n+1$. Assume that the symbol $A(t,\xi)$ 
of a differential operator $A(t,D_x)$ satisfies \eqref{hyp2}--\eqref{hyp3}
and 
belongs to $\mathscr{K}^\prime$ 
for some $\Lambda>0$ and $0<K<K_0$
with sufficiently small $K_0$.
Let $U\in C(\mathbb{R};\mathbb{H}^1(\Rn))\cap C^1(\mathbb{R};\mathbb{L}^2(\Rn))$ 
be a solution to the Cauchy problem 
\[
D_tU=A(t,D_x)U, \quad U(0,x)=U_0(x)\in 
\mathbb{H}^1_\varkappa(\Rn),
\] 
and let $s(t)$ be the functional 
\[
s(t)=\langle SU(t,\cdot),U(t,\cdot)\rangle_{\mathbb{L}^2(\Rn)}. 
\]
Then, for any $0\le |\alpha|\le [\varkappa]+1$, 
there exist  constants $M_\alpha>0$ and
$c_{\alpha,\varkappa}>0$ independent of $U$ such that 
\begin{align*}
& \left\|A(s(t),\xi/|\xi|)\right\|_{(L^\infty(\Rn\backslash0))^{m^2}} \\
\le& \left\|A(s(0),\xi/|\xi|)\right\|_{(L^\infty(\Rn\backslash0))^{m^2}}
+M_0 \left(K\|U_0\|_{\mathbb{L}^2(\Rn)}^2
+\frac{1}{1-c_{\alpha,0} K}\|U_0\|^2_{\mathbb{H}^1_\varkappa(\Rn)}\right), 
\end{align*}
\begin{multline*}
\left\| |\xi|^{-1+|\alpha|}\pa^\alpha_\xi \partial_t A(s(t),\xi)
\right\|_{(L^\infty(\Rn\backslash0))^{m^2}}\\
\le M_\alpha \left(K\|U_0\|_{\mathbb{L}^2(\Rn)}^2
+\frac{1}{1- c_{\alpha,\varkappa} K}\|U_0\|^2_{\mathbb{H}^1_\varkappa(\Rn)}\right)
\langle t\rangle^{-\varkappa}.
\end{multline*} 
\end{lem}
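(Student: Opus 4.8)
The plan is to mimic the structure of the proof of Lemma~\ref{lem:Kirchhoff}, upgrading every estimate so that it carries the additional weight $\langle t\rangle^{-\varkappa}$ and, simultaneously, allowing $\xi$-derivatives up to order $[\varkappa]+1$. The first estimate again follows from the second one by integration in $t$ together with the identity $A(s(t),\omega)=A(s(0),\omega)+\int_0^t\partial_\tau[A(s(\tau),\omega)]\,d\tau$, so the whole matter reduces to controlling $|\xi|^{-1+|\alpha|}\partial_\xi^\alpha\partial_t A(s(t),\xi)$. Since $A(s,\xi/|\xi|)$ has its symbol derivatives $|\xi|^{|\alpha|}\partial_\xi^\alpha a_{jk}$ Lipschitz in $s\in[0,\delta]$, the chain rule reduces this to controlling the quantities $|s'(t)|\,\langle t\rangle^\varkappa$ and, more generally, the weighted $\xi$-derivatives that arise when differentiating $s'(t)$ through its dependence on $\widehat U(t,\rho\omega)$. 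So the core estimate to prove is
\[
\sup_{|\alpha|\le[\varkappa]+1}\;\langle t\rangle^\varkappa
\bigl\|\,|\xi|^{|\alpha|}\partial_\xi^\alpha s'(t)\text{-type terms}\,\bigr\|
\le M\Bigl(K\|U_0\|_{\mathbb{L}^2}^2+\tfrac{1}{1-cK}\|U_0\|_{\mathbb{H}^1_\varkappa}^2\Bigr)\langle t\rangle^{-\varkappa}.
\]

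First I would insert the representation formula \eqref{EQ:Representation of U} into $s'(t)=2\mathrm{Re}\langle S\widehat U',\widehat U\rangle$ and split, exactly as in Lemma~\ref{lem:Kirchhoff}, into the ``easy'' part $J(t)$ (coming from $\partial_t\mathscr N^{-1}$ and $\partial_t\pmb a^j$) and the oscillatory part $I(t)$ (coming from $\partial_t\Phi$, i.e. the factor $\varphi_p$). For $J(t)$ the new ingredient is that $\partial_t\mathscr N$ and $\partial_t\pmb a^j$ now satisfy $L^1$-in-$t$ bounds \emph{with weight} $\langle t\rangle^{-\varkappa}$, inherited from the defining property of $\mathscr K'$ and the Picard series; moreover all $\xi$-derivatives up to order $[\varkappa]+1$ are controlled because the symbol class $\mathscr K'$ controls $|\xi|^{-1+|\alpha|}\partial_\xi^\alpha\partial_t A$. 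The estimate \eqref{EQ:weight}, i.e. $\mathbb H^1_\varkappa\subset\mathscr Y$, together with the fact that $H^1_\varkappa$-regularity of $U_0$ means $\langle\tau\rangle^\varkappa$-decay of the relevant oscillatory integrals (this is the quantitative content of Yamazaki's/D'Ancona--Spagnolo's class $\mathscr Y_\varkappa$), then gives the weighted bound for $J(t)$.

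The oscillatory term $I(t)$ is handled as in Lemma~\ref{lem:Kirchhoff}: after discarding the $p=q$ contributions (which are pure imaginary, since $S$ is Hermitian), one is left with integrals
\[
\int_0^\infty e^{i(\vartheta_p-\vartheta_q)(t,\rho\omega)}\,a^{pj}\overline{a^{qk}}\,\varphi_{pq}\,\widehat f_j(\rho\omega)\overline{\widehat f_k(\rho\omega)}\,\rho^n\,d\rho,
\]
with phase gap $|\varphi_p-\varphi_q|\ge d>0$. Expanding each amplitude $a^{pj}$ in its Picard series and changing variables $\xi=\rho\omega$, each term is, up to the factor $K^\ell$ from $\ell$ applications of $\int\|\partial_t n\|$, an integral of the form $\int_{-\infty}^\infty\int_{\mathbb S^{n-1}}\big|\int_0^\infty e^{i\tau\rho}\widehat f_j\overline{\widehat f_k}\rho^n\,d\rho\big|\,d\sigma\,d\tau$, which is $\lesssim|U_0|_{\mathscr Y}$. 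The new point is that, because $U_0\in\mathbb H^1_\varkappa$, one gains the decay $\langle\tau\rangle^{-\varkappa}$ in the $\tau$-integral, and one can afford to integrate by parts in $\rho$ (equivalently, differentiate the Fourier symbols in $\xi$) up to $[\varkappa]+1$ times while staying integrable — this is precisely where the hypothesis $\varkappa\le n+1$ enters, since the weight $\rho^n$ in the integrand matches the dimension. Summing the geometric series in $K$ gives the factor $1/(1-cK)$ and the $\xi$-derivative bounds together with the $\langle t\rangle^{-\varkappa}$ factor (the latter transported from $\langle\tau\rangle^{-\varkappa}$ via the change of variables $\tau_\omega=\vartheta_p(t,\omega)-\vartheta_q(t,\omega)$ and the lower bound $d$ on $|\varphi_p-\varphi_q|$, which also bounds $|\vartheta_p-\vartheta_q|$ from below by $d|t|$).

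The main obstacle will be bookkeeping the $\xi$-differentiation in the presence of the oscillatory factor $e^{i\vartheta_{pq}(t,\rho\omega)}$: each $\partial_\rho$ hitting this exponential produces a factor $\partial_\rho\vartheta_{pq}\sim t$ (since $\vartheta_{pq}$ is homogeneous of degree one in $\xi$, hence linear in $\rho$), which threatens to destroy the $\langle t\rangle^{-\varkappa}$ decay; one must check that these powers of $t$ are compensated — they are, because $\varkappa$ powers of decay in $\tau$ are available from $U_0\in\mathbb H^1_\varkappa$, and only $[\varkappa]+1\le\varkappa+1$ derivatives are needed, with the ``extra'' half-derivative's worth absorbed by the integrability margin coming from $\varkappa>1$ and $\varkappa\le n+1$. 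Once this weighted stationary-phase-type bound is in place, the remainder of the argument (Schauder--Tychonoff on the subclass $\mathscr K'$, compactness via Ascoli--Arzelà and Vitali--Hahn--Saks, continuity of $\Theta$, and the Gronwall uniqueness argument now run in $\mathbb H^1$) is identical word for word to the proof of Theorem~\ref{thm:Kirchhoff}, with $\mathscr K$ replaced by $\mathscr K'$ and $\mathscr Y$ by $\mathbb H^1_\varkappa$ throughout.
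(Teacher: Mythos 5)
Your outline follows the same route as the paper's own (itself only sketched) argument: the splitting $s'(t)=2\{I(t)+J(t)\}$, the weighted bounds on $\mathscr{N}$, $\mathscr{N}^{-1}$ and the amplitudes inherited from the class $\mathscr{K}^\prime$, the D'Ancona--Spagnolo-type weighted decay of the spherical oscillatory integrals for $H^1_\varkappa$ data obtained by integration by parts in $\rho$ (this is exactly what the paper isolates as Lemma \ref{lem:Spagnolo}), the transfer of $\langle \tau\rangle^{-\varkappa}$ into $\langle t\rangle^{-\varkappa}$ through $|\vartheta_p(t,\omega)-\vartheta_q(t,\omega)|\ge d\,|t|$, and the geometric series in $K$. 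Two remarks on the easy parts: $J(t)$ needs no input from $\mathscr{Y}$ or from any oscillatory decay of the data --- the pointwise bounds $\|\partial_t\mathscr{N}^{-1}(t,\cdot)\|\le CK\langle t\rangle^{-\varkappa}$ and $\|\partial_t\pmb{a}^j(t,\cdot)\|\le CK\langle t\rangle^{-\varkappa}$ together with Plancherel already give $|J(t)|\le CK\|U_0\|^2_{\mathbb{L}^2(\Rn)}\langle t\rangle^{-\varkappa}$, which is what the statement requires; and no $\xi$-derivatives ever fall on $s'(t)$, since $\partial^\alpha_\xi\partial_t\left[A(s(t),\xi)\right]=s'(t)\,\partial^\alpha_\xi\partial_sA(s,\xi)|_{s=s(t)}$, so the whole lemma reduces to $|s'(t)|\lesssim(\cdots)\langle t\rangle^{-\varkappa}$; the order-$([\varkappa]+1)$ $\xi$-derivative bounds on $\mathscr{N}$, $\mathscr{N}^{-1}$, $\pmb{a}^j$ are needed only inside the integration by parts in $\rho$.

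The one genuine soft spot is your treatment of the factors of $t$ produced when $\partial_\rho$ hits the internal Picard exponentials $e^{i\rho\vartheta(\tau_1,\omega)}$. The derivative-counting justification you give (``only $[\varkappa]+1\le\varkappa+1$ derivatives are needed, the excess absorbed by the margin $\varkappa>1$, $\varkappa\le n+1$'') does not close: each such differentiation costs $|\vartheta(\tau_1,\omega)|\lesssim\tau_1\le|t|$, and integrating this against the available weight $\|\partial_{\tau_1}\mathscr{N}(\tau_1,\cdot)\|\le CK\langle\tau_1\rangle^{-\varkappa}$ still leaves a factor growing like $\langle t\rangle^{2-\varkappa}$ when $1<\varkappa<2$, so the target $\langle t\rangle^{-\varkappa}$ is not recovered this way; moreover the restriction $\varkappa\le n+1$ only limits how many integrations by parts the weight $\rho^n$ and the $H^1_\varkappa$ data permit (i.e.\ it is used in Lemma \ref{lem:Spagnolo}) and has nothing to do with absorbing powers of $t$. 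The step is closed, in the spirit of parts (ii)--(iii) of the proof of Lemma \ref{lem:Kirchhoff}, by \emph{not} differentiating the internal exponentials: absorb them into the oscillation parameter, apply Lemma \ref{lem:Spagnolo} with the shifted parameter $\vartheta_{pq}(t,\omega)+\vartheta(\tau_1,\omega)+\cdots$, and then use the dichotomy that either this shifted parameter is still of size $\gtrsim\langle t\rangle$ (so the $\langle\,\cdot\,\rangle^{-\varkappa}$ decay of Lemma \ref{lem:Spagnolo} yields $\langle t\rangle^{-\varkappa}$), or it is small, which --- since $|\vartheta_{pq}(t,\omega)|\ge d|t|$ while the internal phases are $O(\tau_1)$ --- forces $\tau_1\gtrsim|t|$, and then the factor $CK\langle\tau_1\rangle^{-\varkappa}$ coming from $\mathscr{K}^\prime$ supplies the decay. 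With that replacement (and the minor corrections above) your sketch coincides with the paper's argument.
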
 
 
The proof of Lemma \ref{lem:weight-Kirchhoff} can be done by 
some modifications of the argument of Lemma \ref{lem:Kirchhoff},
and we take $K_0>0$ small enough so that $1-c_{\alpha,\varkappa} K_0>0$
for all $\alpha$ and $\varkappa$.
The following lemma can be obtained by a similar proof as 
Lemma A.1 of D'Ancona \& Spagnolo \cite{Dancona2} 
(see also Lemma 3.2 of \cite{Ext-Matsuyama}, and \cite{Yamazaki}).
% Lemma 5.3
\begin{lem}\label{lem:Spagnolo}
Let $n\ge1$ and $\varkappa\in (1,n+1]$. Assume that 
$\va(\xi)\in C(\Rn\backslash0)$ is a 
positively homogeneous function of order one.
Then 
\[
\int_{\mathbb{S}^{n-1}}\left|
\int^\infty_0 e^{i\tau\rho}\hat{f}_1(\rho\omega)
\hat{f}_2(\rho\omega)\va(\omega)\rho^n\, d\rho\right|\, d\sigma(\omega)
\le C_\varkappa\langle \tau \rangle^{-\varkappa}
\|f_1\|_{H^1_\varkappa(\Rn)}\|f_2\|_{H^1_\varkappa(\Rn)}
\]
for any $f_1,f_2\in \mathscr{S}(\Rn)$, where $\tau$ is a real parameter.
\end{lem}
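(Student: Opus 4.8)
\noindent
{\em Plan of the proof of Lemma~\ref{lem:Spagnolo}.}
The plan is to adapt the proof of Lemma A.1 of \cite{Dancona2}. Since $\va$ is continuous on the compact sphere $\mathbb{S}^{n-1}$ it is bounded there, so one may factor out $\|\va\|_{L^\infty(\mathbb{S}^{n-1})}$ and reduce to the case $\va\equiv1$. For fixed $\omega$, set $g_\omega(\rho)=\mathbf{1}_{(0,\infty)}(\rho)\,\widehat f_1(\rho\omega)\widehat f_2(\rho\omega)\,\rho^n$, so that the inner integral is (up to sign) the one--dimensional Fourier transform $\widehat{g_\omega}(\tau)$; the claim is then
\[
\int_{\mathbb{S}^{n-1}}\bigl|\widehat{g_\omega}(\tau)\bigr|\,d\sigma(\omega)\le C_\varkappa\langle\tau\rangle^{-\varkappa}\,\|f_1\|_{H^1_\varkappa(\R^n)}\|f_2\|_{H^1_\varkappa(\R^n)},\qquad\tau\in\R.
\]

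For $|\tau|\le1$ this is immediate, since $\langle\tau\rangle^{-\varkappa}\simeq1$ and
\[
\int_{\mathbb{S}^{n-1}}\bigl|\widehat{g_\omega}(\tau)\bigr|\,d\sigma(\omega)\le\int_{\R^n}|\widehat f_1(\xi)|\,|\widehat f_2(\xi)|\,|\xi|\,d\xi\le\bigl\||\xi|^{1/2}\widehat f_1\bigr\|_{L^2}\bigl\||\xi|^{1/2}\widehat f_2\bigr\|_{L^2}\le C\|f_1\|_{H^1_\varkappa}\|f_2\|_{H^1_\varkappa},
\]
using $|\xi|^{1/2}\le\langle\xi\rangle$ and the continuous inclusion $H^1_\varkappa(\R^n)\hookrightarrow H^1(\R^n)$, valid for $\varkappa\ge0$.

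The substance is the decay for $|\tau|>1$, and here the prefactor $\rho^n$, which vanishes to order $n\ge\varkappa-1$ at the origin, is decisive. The idea is to move the factor $|\tau|^\varkappa$ onto a (fractional, when $\varkappa\notin\N$) derivative of order $\varkappa$ of $\Psi_\omega(\rho)\rho^n$, where $\Psi_\omega(\rho):=\widehat f_1(\rho\omega)\widehat f_2(\rho\omega)$ is smooth: because $\rho^n$ vanishes to order $n$, up to $n$ integrations by parts produce no boundary terms, and the vanishing order of $\rho^n$ likewise controls the near--origin contribution of the fractional part, so that for $\varkappa\le n$ this already yields the bound. For $\varkappa\in(n,n+1]$ the only remaining obstruction is the boundary value of $\partial_\rho^n(\Psi_\omega\rho^n)$ at $0$, which equals $n!\,\widehat f_1(0)\widehat f_2(0)$; one disposes of it by subtracting from $\Psi_\omega$ the term $\widehat f_1(0)\widehat f_2(0)\chi$, with $\chi$ a fixed cut--off $\equiv1$ near $0$, leaving the model integral $\widehat f_1(0)\widehat f_2(0)\int_0^\infty e^{i\tau\rho}\chi(\rho)\rho^n\,d\rho$, which decays like $|\tau|^{-(n+1)}$ (the jump of the $n$th derivative of $\mathbf{1}_{(0,\infty)}\chi\rho^n$ at $0$) and satisfies $\int_{\mathbb{S}^{n-1}}|\widehat f_1(0)\widehat f_2(0)|\,d\sigma(\omega)\le C\|f_1\|_{L^1}\|f_2\|_{L^1}\le C\|f_1\|_{H^1_\varkappa}\|f_2\|_{H^1_\varkappa}$, the last step being legitimate because in this range $\varkappa>n>n/2$, so $\langle x\rangle^{-\varkappa}\in L^2(\R^n)$ and $f_j\in L^1(\R^n)$.

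After these reductions, expanding the order--$\varkappa$ derivative of $\Psi_\omega(\rho)\rho^n$ by the (fractional) Leibniz rule leaves finitely many integrals of the type $\int_{\mathbb{S}^{n-1}}\!\int_0^\infty\bigl|\partial_\rho^a\widehat f_1(\rho\omega)\bigr|\,\bigl|\partial_\rho^b\widehat f_2(\rho\omega)\bigr|\,\rho^{m-1}\,d\rho\,d\sigma(\omega)$ with $a+b=m$, which I would bound by Cauchy--Schwarz in $(\rho,\omega)$ with respect to the measure $\rho^{m-1}\,d\rho\,d\sigma(\omega)$, using that $\partial_\rho^a[\widehat f_1(\rho\omega)]$ is a bounded combination of the $(\partial_\xi^\gamma\widehat f_1)(\rho\omega)=c_\gamma\,\widehat{x^\gamma f_1}(\rho\omega)$, $|\gamma|=a$, whose $L^2(\R^n)$ norms are $\le\|x^\gamma f_1\|_{L^2}\le\|f_1\|_{H^1_\varkappa}$ once $|\gamma|\le\varkappa$. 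The hard part will be exactly this bookkeeping: one must distribute the (at most $\varkappa$) differentiations so that neither $\widehat f_1$ nor $\widehat f_2$ is ever hit more than $\varkappa$ times---this is where the hypothesis $\varkappa\le n+1$ and the ``slack'' offered by the derivatives falling on $\rho^n$ are used---and one must absorb the small--$\rho$ part, where $\rho^{m-1}\,d\rho\,d\sigma(\omega)$ becomes the singular measure $|\xi|^{m-n}\,d\xi$ for $m<n$, by exploiting that $\widehat f_j$ is merely bounded there rather than small; the endpoint $\varkappa=n+1$ is the most delicate case, and is precisely the one settled by the model--term decomposition above, exactly as in the proof of Lemma A.1 of \cite{Dancona2}.
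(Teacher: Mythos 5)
Your plan coincides with the paper's own treatment: the paper offers no independent proof of Lemma \ref{lem:Spagnolo}, stating only that it follows by a similar argument to Lemma A.1 of D'Ancona and Spagnolo \cite{Dancona2}, which is exactly the adaptation you outline (bounding $\va$ on the sphere, the trivial bound for $|\tau|\le 1$, integration by parts in $\rho$ exploiting the order-$n$ vanishing of $\rho^n$ at the origin, and the model-term correction for $\varkappa\in(n,n+1]$). So the approach is essentially the same; note only that, like the paper, you leave the weighted/fractional-derivative bookkeeping to the cited reference rather than carrying it out.
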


In particular, observing the proof of Lemma \ref{Diagonalisation} 
(see Proposition 6.4 in \cite{Mizohata}), one can check that 
if $A(t,\xi)\in \mathscr{K}^\prime$, then 
derivatives of $\mathscr{N}(t,\xi)=(n_{jk}(t,\xi))_{j,k=1}^m$ and 
$\mathscr{N}(t,\xi)^{-1}=(n^{pq}(t,\xi))_{p,q=1}^m$ satisfy  
\[
\left\||\xi|^{-|\alpha|}\pa^\alpha_\xi n_{jk}(t,\xi)\right\|_{L^\infty(\Rn\backslash0)}, 
\, \left\||\xi|^{-|\alpha|}\pa^\alpha_\xi n^{pq}(t,\xi)\right\|_{L^\infty(\Rn\backslash0)}
\le C_\alpha \Lambda,
\]
\[
\left\||\xi|^{-|\alpha|}\pa^\alpha_\xi \pa_t n_{jk}(t,\xi)\right\|_{L^\infty(\Rn\backslash0)}, 
\, \left\||\xi|^{-|\alpha|}\pa^\alpha_\xi \pa_t n^{pq}(t,\xi)\right\|_{L^\infty(\Rn\backslash0)}
\le C_\alpha K\langle t \rangle^{-\varkappa},
\]
for any $0\le |\alpha|\le [\varkappa]+1$. Furthermore, $t$-derivatives of 
amplitudes $\pmb{a}^j(t,\xi)$ 
are estimated by 
\[
\|\pa_t\pmb{a}^j(t,\xi)\|_{(L^\infty(\Rn\backslash0))^m}\le CK\langle t \rangle^{-\varkappa}.
\]
Combining these estimates and the decay estimates for oscillatory integrals 
given in Lemma \ref{lem:Spagnolo}, we can perform the integration by parts with 
respect to $\rho=|\xi|$ in oscillatory integrals $I_{p,q}(\tau,t)$. Thus we find that 
\[
\sum_{p\ne q}I^*_{p,q}(\tau)\le \frac{C_\varkappa}{1-
c_{\alpha,\varkappa} K}\|U_0\|^2_{\mathbb{H}^1_\varkappa(\Rn)}
\langle \tau\rangle^{-\varkappa}
\] 
for any $\varkappa\in (1,n+1]$, which implies that 
\[
|I(t)|\le \frac{C_\varkappa}{1-
c_{\alpha,\varkappa} K}\|U_0\|^2_{\mathbb{H}^1_\varkappa(\Rn)}
\langle t\rangle^{-\varkappa}.
\] 
As to $J(t)$, we easily obtain  
\[
|J(t)|\le CK\|U_0\|^2_{\mathbb{L}^2(\Rn)}\langle t \rangle^{-\varkappa}.
\]
Hence Lemma \ref{lem:weight-Kirchhoff} is proved by combining the decay estimates 
for $I(t)$ and $J(t)$. \\

Resorting to Lemma \ref{lem:weight-Kirchhoff}, we can perform the fixed point 
argument as in the previous section (see also \cite{Ext-Matsuyama}), 
and as a result, 
the solution $U(t,x)$ to the linear system will be, of course, a solution to the original 
nonlinear system, which allows us to conclude the proof of Theorem 
\ref{thm:weight-Kirchhoff}.  \hfill $\Box$

\end{document}